\newtheoremstyle{thm}
{9pt}
{9pt}
{\itshape}
{}
{\bfseries}
{.}
{ }
{}
\theoremstyle{thm}
\newtheorem{theorem}{Theorem}[section]
\newtheorem{lemma}[theorem]{Lemma}
\newtheorem{corollary}[theorem]{Corollary}
\newtheoremstyle{def}
{9pt}
{9pt}
{}
{}
{\bfseries}
{.}
{ }
{}
\theoremstyle{def}
\newtheorem{remark}[theorem]{Remark}
\DeclareMathOperator{\sgn}{sgn}
\DeclareMathOperator{\arcsine}{arcsine}
\newcommand{\R}{\mathbb{R}} 
\newcommand{\E}{\mathbb{E}} 
\newcommand{\PP}{\mathbb{P}} 
\newcommand{\fse}{\overset{a.s.}{\longrightarrow}}
    \def\cd{\stackrel{\mathcal{D}}{\longrightarrow}}
    \def\cp{\stackrel{\mathcal{\PP}}{\longrightarrow}}
\renewcommand{\footnoterule}{%
	\kern -3.5pt
	\hrule width \textwidth height 1pt
	\kern 3.5pt
}
\renewcommand{\footnoterule}{%
	\kern -3.5pt
	\hrule width \textwidth height 1pt
	\kern 3.5pt
}
\def\blfootnote{\xdef\@thefnmark{}\@footnotetext}
\title{Cauchy or not Cauchy? New goodness-of-fit tests for the Cauchy distribution}
\author{Bruno Ebner\\
 Institute of Stochastics, \\
Karlsruhe Institute of Technology (KIT), \\
Englerstr. 2, D-76133 Karlsruhe. \\
\href{mailto:Bruno.Ebner@kit.edu}{Bruno.Ebner@kit.edu}\\
\And
Lena Eid\\
Institute of Stochastics, \\
Karlsruhe Institute of Technology (KIT), \\
Englerstr. 2, D-76133 Karlsruhe. \\
\And
Bernhard Klar\\
Institute of Stochastics, \\
Karlsruhe Institute of Technology (KIT), \\
Englerstr. 2, D-76133 Karlsruhe. \\
\href{mailto:Bernhard.Klar@kit.edu}{Bernhard.Klar@kit.edu}
}
\date{\today}
\begin{document}

\maketitle

\blfootnote{ {\em MSC 2010 subject classifications.} Primary 62G10 Secondary 62E10}
\blfootnote{
{\em Key words and phrases} Goodness-of-fit; Cauchy distribution; Hilbert-space valued random elements}

\begin{abstract}
We introduce a new characterization of the Cauchy distribution and propose a class of goodness-of-fit tests to the Cauchy family. The limit distribution is derived in a Hilbert space framework under the null hypothesis and under fixed alternatives. The new tests are consistent against a large class of alternatives. A comparative Monte Carlo 
simulation study shows that the test is competitive to the state of the art procedures, and we apply the tests to log-returns of cryptocurrencies.
\end{abstract}

\section{Introduction}\label{sec:Intro}
In this article we dedicate our studies to answer the question of whether a data set of univariate real numbers belongs to the famous family of Cauchy distributions. The Cauchy distribution is undoubtedly the standard example for a distribution without existing mean value and was studied in the mathematical world for more than 300 years, having wide applicability in diverse fields ranging from modeling resonances in physics (then often called Lorentz distribution) to cryptocurrencies in finance, see \cite{S:2020}. It is also known as the Breit-Wigner distribution, for an extensive historical overview, see \cite{S:1974}. To be precise, we write shorthand C$(\alpha,\beta)$, $\alpha \in\R$, $\beta > 0$, for the Cauchy distribution with location parameter $\alpha$ and scale parameter $\beta$, having density
\begin{equation*}
f(x,\alpha,\beta) = \frac{1}{\pi}\frac{\beta}{\beta^2+(x-\alpha)^2},\quad x\in\R.
\end{equation*}
For a detailed discussion on this family, see \cite{JKB:1994}, Chapter 16. The Cauchy distribution is a so called heavy tailed distribution and is a member of the stable distributions, see \cite{N:2020}. Note that $X \sim$ C$(\alpha, \beta)$ if, and only if, $\frac{X-\alpha}{\beta} \sim$ C$(0,1)$ and hence the Cauchy distribution belongs to the location-scale family of distributions. In the following we denote the family of Cauchy distributions by $\mathcal{C}:=\{\mbox{C}(\alpha,\beta):\,\alpha\in\R,\beta>0\}$, a family of distributions which is closed under translation and rescaling. We test the composite hypothesis
\begin{equation}\label{eq:H0}
H_0:\;\mathbb{P}^X\in\mathcal{C}
\end{equation}
against general alternatives on the basis of independent identical copies $X_1,\ldots,X_n$ of $X$. This testing problem has been considered in the literature: \cite{GH:2000} propose a test procedure based on the empirical characteristic function and \cite{MT:2005} extend this test by considering alternative estimation methods. More recently, \cite{MZ:2017} propose to use the likelihood ratio as in \cite{Z:2002} as well as the Kullback-Leibler distance, an idea that is extended in \cite{MZ:2019}. A quantile based method is proposed in \cite{R:2001} and compared to the classical omnibus procedures. An empirical power study of goodness-of-fit tests for the Cauchy model based on the empirical distribution function as the Kolmogorov-Smirnov test, the Cram\'{e}r-von Mises test, the Kuiper test, the Anderson Darling and the Watson test is found in \cite{ODYM:2001}. In \cite{L:2005}, two tests for the standard Cauchy distribution based on characterizations are given; however, they are not designed for the composite hypothesis \eqref{eq:H0}.

The novel procedure is based on the following new characterization of the standard Cauchy distribution.
\begin{theorem}\label{thm:Char}
Let $X$ be a random variable with absolutely continuous density $p$ and $\E\left[ \frac{|X|}{1+X^2}\right] < \infty$. Then $X$ has a Cauchy Distribution C$(0,1)$ if, and only if 
\begin{equation}\label{eq:meanchar}
\E \left[\left(it-\frac{2X}{1+X^2}\right)\exp(itX)\right]= 0
\end{equation}
holds for all $t\in \R$, where $i$ denotes the imaginary unit.
\end{theorem}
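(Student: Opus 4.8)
The plan is to show that \eqref{eq:meanchar} is, after writing both expectations as Fourier transforms, equivalent to the first-order differential equation $(1+x^2)p'(x)+2x\,p(x)=0$ holding for almost every $x$, and that the standard Cauchy density is the only probability density solving it. For the ``only if'' part I would take $X\sim$~C$(0,1)$ with density $p(x)=1/(\pi(1+x^2))$, observe that $2x\,p(x)/(1+x^2)=-p'(x)$ with $p'\in L^1(\R)$ and $p(x)\to0$ as $|x|\to\infty$, and conclude by integration by parts that
\begin{equation*}
\E\!\left[\frac{2X}{1+X^2}\exp(itX)\right]=-\int_\R \exp(itx)\,p'(x)\,\mathrm{d}x=it\int_\R \exp(itx)\,p(x)\,\mathrm{d}x=it\,\E[\exp(itX)],
\end{equation*}
which is precisely \eqref{eq:meanchar} (the boundary terms vanish since $p$ is bounded and tends to $0$).

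The substantive direction is ``if''. Put $g(x):=2x\,p(x)/(1+x^2)$; the moment assumption $\E[|X|/(1+X^2)]<\infty$ gives $g\in L^1(\R)$, and \eqref{eq:meanchar} reads $\widehat{g}(t)=it\,\varphi(t)$ for all $t\in\R$, where $\varphi(t)=\E[\exp(itX)]=\widehat{p}(t)$ is the characteristic function of $X$ and $\widehat{f}(t)=\int_\R f(x)\exp(itx)\,\mathrm{d}x$. The key point is to convert this scalar identity into a statement about $p$ itself. I would test it against an arbitrary Schwartz function $\psi$: Fubini's theorem (valid since $g,p\in L^1$ and $t\mapsto t\psi(t)$ is integrable) together with $\int_\R it\exp(itx)\psi(t)\,\mathrm{d}t=(\widehat{\psi})'(x)$ yields $\int_\R g\,\widehat{\psi}=\int_\R p\,(\widehat{\psi})'$; since $\psi\mapsto\widehat{\psi}$ is a bijection of the Schwartz space, this says $\int_\R g\,\phi=\int_\R p\,\phi'$ for every Schwartz $\phi$, i.e. the distributional derivative of $p$ equals $-g$. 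Because $p$ is absolutely continuous, this distributional derivative coincides with the pointwise a.e.\ derivative, so $p'(x)=-2x\,p(x)/(1+x^2)$ for almost every $x$.

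It then remains to integrate this equation. The function $h(x):=(1+x^2)p(x)$ is absolutely continuous on every bounded interval and satisfies $h'(x)=2x\,p(x)+(1+x^2)p'(x)=0$ for a.e.\ $x$, hence $h$ is constant, say $h\equiv c$; thus $p(x)=c/(1+x^2)$, and $\int_\R p=1$ forces $c=1/\pi$, i.e.\ $X\sim$~C$(0,1)$.

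I expect the main obstacle to be the middle step: deducing the pointwise differential equation for $p$ from the single identity $\widehat{g}(t)=it\,\varphi(t)$. One cannot simply invoke Fourier inversion, because a priori neither $\varphi$ nor $p'$ is known to be integrable; the test-function computation above is designed precisely to sidestep this, using only that $g$ and $p$ lie in $L^1$. A secondary technical point is the elementary fact that a locally absolutely continuous function with a.e.-vanishing derivative is constant, which is what legitimises passing from $h'=0$ a.e.\ to $h\equiv c$ on all of $\R$.
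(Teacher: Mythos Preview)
Your proof is correct and follows the same strategy as the paper: both convert \eqref{eq:meanchar} via Fourier analysis into the differential equation $p'(x)+\dfrac{2x}{1+x^2}\,p(x)=0$ a.e., and then identify the Cauchy density as the unique probability solution. The paper is terser---it simply asserts that $-it\,\E[\exp(itX)]$ is the Fourier--Stieltjes transform of $p'$ and invokes injectivity of the transform---whereas your test-function argument makes this step rigorous without any a~priori integrability assumption on $p'$ or $\varphi$, which is a genuine improvement in precision.
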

\begin{proof}
For $X\sim\mbox{C}(0,1)$ direct calculation shows the assertion. Let $X$ be a random variable with absolutely continuous density function $p$ such that 
\begin{equation*}
\E \left[\left(it-\frac{2X}{1+X^2}\right)\exp(itX)\right]= 0
\end{equation*}
holds for all $t\in \R$. Note that since $-it\E[\exp(itX)]$ is the Fourier-Stieltjes transform of the derivative of $p$ we have
\begin{equation*}
0=\E \left[\left(it-\frac{2X}{1+X^2}\right)\exp(itX)\right]=\int_{-\infty}^\infty \left(-p'(x)-\frac{2x}{1+x^2}p(x)\right)\exp(itx)\,\mbox{d}x
\end{equation*}
for all $t\in\R$. By properties of the Fourier-Stieltjes transform we hence note that $p$ must satisfy the ordinary differential equation
\begin{equation*}
p'(x)+\frac{2x}{1+x^2}p(x)=0
\end{equation*}
for almost all $x\in\R$. The only solution satisfying $\int_{-\infty}^\infty p(x)\mbox{d}x=1$ is $p(x)=f(x,0,1)$, $x\in\R$, and $X\sim\mbox{C}(0,1)$ follows.
\end{proof}
\begin{remark}
Note that the characterization in Theorem \ref{thm:Char} is related to the spectral representation of the Stein operator of the so called density approach, for details on Stein operators, see \cite{SReview:2021}.
\end{remark}

\subsection{A new class of goodness of fit tests for the Cauchy distribution}
Note that the testing problem under discussion is invariant with respect to transformations of the kind $x\to ax+b, \, x\in\R$, where $a\in\R$ and $b>0$. Consequently, a decision in favor or against $H_0$ should be the same for  $X_1,\ldots,X_n$ and  $aX_1+b,\ldots,aX_n+b$. This goal is achieved if the test statistic $T_n$, say, is based on the scaled residuals $Y_{n,1},...,Y_{n,n}$, given by
\begin{equation}
\label{Y}
Y_{n,j}=\frac{X_j-\widehat{\alpha}_n}{\widehat{\beta}_n}, \quad j=1,\ldots,n.
\end{equation}
Here, $\widehat{\alpha}_n=\widehat{\alpha}_n(X_1,...,X_n)$ and $\widehat{\beta}_n=\widehat{\beta}_n(X_1,...,X_n)$ denote consistent estimators of $\alpha\in\R$ and $\beta>0$ such that 
\begin{eqnarray}
\label{eq:alpha}
\widehat{\alpha}_n(aX_1+b,...,aX_n+b)&=&a\widehat{\alpha}_n(X_1,...,X_n)+b,\\\label{eq:beta}
\widehat{\beta}_n(aX_1+b,...,aX_n+b)&=&a\widehat{\beta}_n(X_1,...,X_n),
\end{eqnarray}
holds for each $a>0$ and $b\in\R$. By (\ref{eq:alpha}) and (\ref{eq:beta}) it is easy to see that $Y_{n,j}$, $j=1,\ldots,n,$ do not depend on the location nor on the scale parameter. Hence, $T_n$ has the property
$T_n(aX_1+b,\ldots,aX_n+b)=T_n(X_1,\ldots,X_n)$, and we may and do assume $\alpha=0$ and $\beta=1$ in the following. Specifically, we choose the test statistic
\begin{equation}
\label{eq:statistic}
T_{n}=n\int_{-\infty}^{\infty}\Big\vert\frac{1}{n}\sum_{j=1}^n\Big(it-\frac{2Y_{n,j}}{1+Y_{n,j}^2}\Big)e^{itY_{n,j}}\Big\vert^2\omega(t)\;\mbox{d}t,
\end{equation}
which is the weighted $L^2$-distance from (\ref{eq:meanchar}) to 0. Here, $\omega:\R\rightarrow\R$ denotes a weight function such that
\begin{equation}
\label{omega}
\omega(t)=\omega(-t),\ t\in\R,\text{ and }\ \int_{-\infty}^{\infty}\omega(t)\;\mbox{d}t<\infty,
\end{equation}
and $|\cdot|$ is the complex absolute value. For the particular choice $\omega(t)=\omega_a(t)=\exp(-a|t|)$, $t\in\R$, $a>0$, of the weight function, we get the integration-free, numerical stable formula
\begin{align}
\label{eq:EF}
T_{n,a}=\frac{1}{n}\sum_{j,k=1}^n\bigg(&\frac{8aY_{n,j}Y_{n,k}}{(1+Y_{n,j}^2)(1+Y_{n,k}^2)((Y_{n,j}-Y_{n,k})^2+a^2)}
-\frac{16aY_{n,j}(Y_{n,j}-Y_{n,k})}{(1+Y_{n,j}^2)((Y_{n,j}-Y_{n,k})^2+a^2)^2}\nonumber\\\
&+\frac{4a^3-12a(Y_{n,j}-Y_{n,k})^2}{((Y_{n,j}-Y_{n,k})^2+a^2)^3}\bigg),
\end{align}
and hence a whole family of tests depending on the so called tuning parameter $a>0$. 
The next result reveals the limit behavior of $T_{n,a}$ for $a\rightarrow 0$ and $a\rightarrow\infty$.

\begin{theorem}\label{thmlimit} 
For fixed $n$, we have
\begin{align} \label{stat:lim1}
 \lim_{a\to 0}  a \bigg( T_{n,a}-\frac{4}{a^3} \bigg) &= \frac{8}{n} \sum_{j=1}^n \frac{Y_{n,j}^2}{(1+Y_{n,j}^2)^2},
\end{align}
and
\begin{align} \label{stat:lim2}
\lim_{a\to\infty} aT_{n,a} &= \frac{8}{n} \Bigg( \sum_{j=1}^n \frac{Y_{n,j}}{1+Y_{n,j}^2} \Bigg)^2.
\end{align}
\end{theorem}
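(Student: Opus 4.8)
The plan is to argue straight from the integration-free representation \eqref{eq:EF}, writing the double sum $\sum_{j,k=1}^{n}$ as the sum of its $n$ diagonal terms ($j=k$) and its off-diagonal terms ($j\neq k$). Since $n$ is fixed and the scaled residuals $Y_{n,1},\dots,Y_{n,n}$ may be assumed pairwise distinct (for an absolutely continuous sample this holds almost surely), every sum below is finite, and on the off-diagonal the quantities $(Y_{n,j}-Y_{n,k})^{2}+a^{2}$ remain bounded away from $0$ even in the limit $a\to 0$. Hence the whole argument comes down to tracking the powers of $a$ in the three summands of \eqref{eq:EF} and passing to the limit term by term.

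For the limit $a\to 0$ I would first deal with the diagonal. When $j=k$ one has $Y_{n,j}-Y_{n,k}=0$, so the three summands in \eqref{eq:EF} reduce to $8aY_{n,j}^{2}/\big((1+Y_{n,j}^{2})^{2}a^{2}\big)$, $0$ and $4a^{3}/a^{6}$; summing over $j$ and using the prefactor $1/n$ shows that the diagonal part of $T_{n,a}$ equals
\begin{equation*}
\frac{4}{a^{3}}+\frac{8}{an}\sum_{j=1}^{n}\frac{Y_{n,j}^{2}}{(1+Y_{n,j}^{2})^{2}}.
\end{equation*}
For each fixed pair $j\neq k$, every one of the three off-diagonal summands has an explicit factor $a$ in its numerator, while its denominator tends to one of $(Y_{n,j}-Y_{n,k})^{2}$, $(Y_{n,j}-Y_{n,k})^{4}$, $(Y_{n,j}-Y_{n,k})^{6}$, all nonzero; so the off-diagonal part of $T_{n,a}$ is $O(a)$ as $a\to 0$. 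Consequently $T_{n,a}-\frac{4}{a^{3}}=\frac{8}{an}\sum_{j=1}^{n}Y_{n,j}^{2}(1+Y_{n,j}^{2})^{-2}+O(a)$, and multiplying by $a$ and letting $a\to 0$ gives \eqref{stat:lim1}.

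For the limit $a\to\infty$ I would multiply \eqref{eq:EF} through by $a$. In the first sum the factor $a^{2}/\big((Y_{n,j}-Y_{n,k})^{2}+a^{2}\big)$ tends to $1$ for every pair $(j,k)$, so $a$ times the first sum converges to $\frac{8}{n}\sum_{j,k=1}^{n}\frac{Y_{n,j}Y_{n,k}}{(1+Y_{n,j}^{2})(1+Y_{n,k}^{2})}=\frac{8}{n}\big(\sum_{j=1}^{n}\frac{Y_{n,j}}{1+Y_{n,j}^{2}}\big)^{2}$, the last equality being the factorization of the double sum. In the second and third sums the corresponding factors $a^{2}/\big((Y_{n,j}-Y_{n,k})^{2}+a^{2}\big)^{2}$ and $a^{4}/\big((Y_{n,j}-Y_{n,k})^{2}+a^{2}\big)^{3}$ are $O(a^{-2})$, hence those contributions vanish. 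This yields \eqref{stat:lim2}.

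There is no real obstacle here: once the diagonal/off-diagonal splitting is made the computation is elementary, and the only point needing a word is the $a\to 0$ case, where it matters that the off-diagonal gaps $|Y_{n,j}-Y_{n,k}|$ are positive — otherwise those denominators would degenerate as well — which is exactly why we assume the data to be pairwise distinct. One could instead start from \eqref{eq:statistic} and expand $aT_{n,a}=na\int_{\R}|\cdot|^{2}e^{-a|t|}\,\mathrm{d}t$ after the substitution $u=at$, using dominated convergence with a majorant of the form $(|u|+c)^{2}e^{-|u|}$; this is marginally more conceptual but leads to the same elementary calculation.
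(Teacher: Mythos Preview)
Your argument for $a\to 0$ is essentially the paper's: both split \eqref{eq:EF} into diagonal and off-diagonal parts, compute the diagonal explicitly, and observe that the off-diagonal contribution is negligible. Your version is in fact slightly more careful, since you note explicitly that the off-diagonal part is $O(a)$ (which is what is actually needed after multiplication by $a$) and that this relies on the pairwise distinctness of the $Y_{n,j}$; the paper only states $\lim_{a\to 0}T_{n,a}^{nd}=0$ without comment.

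For $a\to\infty$ you take a genuinely different route. The paper rewrites $T_{n,a}$ as the Laplace transform $\int_0^\infty g(t)e^{-at}\,\mathrm{d}t$ of a function $g$ built from the real and imaginary parts of the defining sum, computes $g(0)=\lim_{t\to 0}g(t)$, and then invokes an Abelian theorem for Laplace transforms to conclude that $aT_{n,a}\to g(0)$. Your approach stays with the closed form \eqref{eq:EF} and simply tracks the powers of $a$ in each of the three summands after multiplying by $a$; this is entirely elementary and avoids the detour through Laplace-transform asymptotics. What the paper's method buys is a structural explanation---the limit is identified with the value at $0$ of the integrand, which makes the connection to the score function in Remark~\ref{remark:Tinfty} transparent---whereas your direct computation is shorter and needs no outside reference. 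The alternative you sketch at the end (substitute $u=at$ and use dominated convergence) is closer in spirit to the paper's argument, though still not identical.
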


\begin{proof}
Splitting the sum in (\ref{eq:EF}) in a diagonal and a non-diagonal part results in 
\begin{align*}
T_{n,a}= \frac{1}{n}\sum_{j,k=1}^n R_{j,k,a}
=\frac{1}{n}\sum_{j=1}^n R_{j,j,a} +\frac{1}{n}\sum_{j\neq k} R_{j,k,a}
=T_{n,a}^{d}+T_{n,a}^{nd},
\end{align*}
say. Since $\lim_{a\to 0}T_{n,a}^{nd}=0$, we obtain
\begin{align*}
\lim_{a\to 0}  aT_{n,a} &= \lim_{a\to 0}  aT_{n,a}^{d} 
= \frac{8}{n} \sum_{j=1}^n \left\{ \frac{Y_{n,j}^2}{(1+Y_{n,j}^2)^2} + \frac{4}{a^2} \right\},
\end{align*}
and the first assertion follows.

Observe that $T_{n,a}  =  \int_0^\infty g(t) \exp(-at) \: dt$,
where $g(u) = n (h_1^2(t)+h_2^2(t))$, and $h_1$ and $h_2$ are defined by
\begin{align*}
h_1(t) &=  \frac{1}{n} \sum_{j=1}^n \bigg\{ \bigg(\frac{2Y_{n,j}}{1+Y_{n,j}^2}+t\bigg) \cos(tY_{n,j})
 + \bigg(t-\frac{2Y_{n,j}}{1+Y_{n,j}^2}\bigg) \sin(tY_{n,j}) \bigg\}, \\
h_2(t) &=  \frac{1}{n} \sum_{j=1}^n \bigg\{ \bigg(\frac{2Y_{n,j}}{1+Y_{n,j}^2}-t\bigg) \cos(tY_{n,j})
 + \bigg(t+\frac{2Y_{n,j}}{1+Y_{n,j}^2}\bigg) \sin(tY_{n,j}) \bigg\}.
 \end{align*}
Since
\begin{align*}
\lim_{t\to 0} h_1(t) &= \lim_{t\to 0} h_2(t) 
\ = \frac{2}{n} \sum_{j=1}^n \frac{Y_{n,j}}{1+Y_{n,j}^2},
 \end{align*}
the second assertion follows from an Abelian theorem on Laplace transforms (see \cite{W:1959}, p.182).
\end{proof}

The rest of the paper is organized as follows. In Section 2, the limit distribution of the test statistic $T_{n,a}$ is derived in a Hilbert space framework under the null hypothesis and under fixed alternatives. Furthermore, we derive the limit distribution of $T_{n,0}$. Consistency of the new tests against a large class of alternatives is shown in Section 3. An extensive Monte Carlo simulation study in Section 4 shows that the test is competitive to the state of the art procedures. Finally, in Section 4, the tests are applied to log-returns of cryptocurrencies.

\section{Limit distribution under the null hypothesis}
The asymptotic theory is derived in the Hilbert space $\mathbb{H}$ of measurable, square integrable functions $\mathbb{H}=L^2(\mathbb{R},\mathcal{B}, \omega(t)\mbox{d}t)$, where $\mathcal{B}$ is the Borel-$\sigma$-field of $\mathbb{R}$. Notice that the functions figuring within the integral in the definition of $T_{n}$ are $(\mathcal{A} \otimes \mathcal{B}, \mathcal{B})$-measurable random elements of $\mathbb{H}$. We denote by
\begin{equation*}
	\|f\|_{\mathbb{H}} = \left( \int_{-\infty}^\infty \big|f(t)\big|^2 \, \omega(t) \, \mathrm{d}t \right)^{1/2}, \qquad \langle f, g \rangle_{\mathbb{H}}=\int_{-\infty}^\infty f(t)g(t) \, \omega(t) \, \mathrm{d}t
\end{equation*}
the usual norm and inner product in $\mathbb{H}$.  In the following, we assume that the estimators $\widehat{\alpha}_n$ and $\widehat{\beta}_n$ allow linear representations 
\begin{align}
    \sqrt{n} \widehat{\alpha}_n&= \frac{1}{\sqrt{n}} \sum_{j = 1}^n \psi_1 (X_{j}) + o_{\mathbb{P}}(1),\label{eq:psi_11}\\
    \sqrt{n} (\widehat{\beta}_n - 1) &= \frac{1}{\sqrt{n}} \sum_{j = 1}^n \psi_2 (X_{j}) + o_{\mathbb{P}}(1),\label{eq:psi_21}
\end{align}
where $o_\mathbb{P}(1)$ denotes a term that converges to 0 in probability, and $\psi_1$ und $\psi_2$ are measurable functions with
\begin{equation*}
    \mathbb{E}[\psi_1 (X_{1})] = \mathbb{E}[\psi_2 (X_{1})] = 0, \quad\mbox{and}\quad \mathbb{E}[\psi_1^2 (X_{1})] < \infty,\;  \mathbb{E}[\psi_2^2 (X_{1})] < \infty, \label{eq:psi2}
\end{equation*}
see Remark \ref{rem:est} for examples of estimation procedures satisfying these assumptions. By the symmetry of the weight function $\omega(\cdot)$ straightforward calculations show
\begin{equation*}
    T_n=\int_{-\infty}^\infty Z_n^2(t)\,\omega(t)\mbox{d}t,
\end{equation*}
where
\begin{equation}\label{eq:Zn}
    Z_n(t)=\frac{1}{\sqrt{n}}\sum_{j=1}^n\Big(\frac{2Y_{n,j}}{1+Y_{n,j}^2}+t\Big)\cos(tY_{n,j})+\Big(t-\frac{2Y_{n,j}}{1+Y_{n,j}^2}\Big)\sin(tY_{n,j}),\ \ t\in\R,
\end{equation}
is a real-valued $(\mathcal{A} \otimes \mathcal{B}, \mathcal{B})$-measurable random element of $\mathbb{H}$.

\begin{theorem}\label{thm:asympVer}
Let $X_1,...,X_n$ be i.i.d. $\mbox{C}(0,1)$ distributed random variables and $Z_n$ be the random element of $\mathbb{H}$ as in \eqref{eq:Zn}. Then there exists a centred Gaussian random process $\mathcal{Z}$ in $\mathbb{H}$ with covariance kernel
\begin{align*}
K(s,t)&=\frac{1}{2}\big(s^2+t^2+\vert s-t\vert+1\big)e^{-\vert s-t\vert}\\
&-\frac{1}{2}(t^2+\vert t\vert+1)e^{-\vert t\vert}\E\bigg[\bigg(\bigg(\frac{2X_1}{1+X_1^2}+s\bigg)\cos(sX_1)+\bigg(s-\frac{2X_1}{1+X_1^2}\bigg)\sin(sX_1)\bigg)\psi_1(X_1)\bigg]  \\
&+\frac{1}{2}t(\vert t\vert+1)e^{-\vert t\vert}\E\bigg[\bigg(\bigg(\frac{2X_1}{1+X_1^2}+s\bigg)\cos(sX_1)+\bigg(s-\frac{2X_1}{1+X_1^2}\bigg)\sin(sX_1)\bigg)\psi_2(X_1)\bigg]\\
&-\frac{1}{2}(s^2+\vert s\vert+1)e^{-\vert s\vert}\E\bigg[\bigg(\bigg(\frac{2X_1}{1+X_1^2}+t\bigg)\cos(tX_1)+\bigg(t-\frac{2X_1}{1+X_1^2}\bigg)\sin(tX_1)\bigg)\psi_1(X_1)\bigg]\\
&+\frac{1}{2}s(\vert s\vert+1)e^{-\vert s\vert}\E\bigg[\bigg(\bigg(\frac{2X_1}{1+X_1^2}+t\bigg)\cos(tX_1)+\bigg(t-\frac{2X_1}{1+X_1^2}\bigg)\sin(tX_1)\bigg)\psi_2(X_1)\bigg]\\
&+\frac{1}{4}(s^2+\vert s\vert+1)(t^2+\vert t\vert+1)e^{-\vert s\vert-\vert t\vert}\E[\psi_1^2(X_1)]+\frac{1}{4}s(\vert s\vert+1)t(\vert t\vert+1)e^{-\vert s\vert-\vert t\vert}\E[\psi_2^2(X_1)],\quad s,t\in\R,
\end{align*}
such that $Z_n\overset{D}{\longrightarrow}\mathcal{Z}$ in $\mathbb{H}$ as $n\rightarrow\infty$. 
\end{theorem}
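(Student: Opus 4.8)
The plan is to establish the convergence $Z_n \overset{D}{\longrightarrow} \mathcal{Z}$ in $\mathbb{H}$ by first linearizing $Z_n$ around the true parameter values $\alpha=0$, $\beta=1$, reducing the problem to a sum of i.i.d.\ Hilbert-space-valued random elements, and then invoking the central limit theorem in Hilbert spaces together with a negligibility argument for the remainder. First I would introduce the ``oracle'' process $\widetilde{Z}_n(t)$ obtained by replacing $Y_{n,j}$ with $X_j$ in \eqref{eq:Zn}; under $H_0$ the summands of $\widetilde{Z}_n$ are centred i.i.d.\ elements of $\mathbb{H}$ (using the characterization in Theorem~\ref{thm:Char}, which guarantees $\E[\widetilde{Z}_n(t)]=0$ pointwise, and a bound of the form $\|(\tfrac{2X_1}{1+X_1^2}+t)\cos(tX_1)+(t-\tfrac{2X_1}{1+X_1^2})\sin(tX_1)\|_{\mathbb{H}}^2 \lesssim 1+X_1^2 \cdot \int t^2\omega(t)\,dt$ — here the finiteness of $\int t^2 \omega(t)\,dt$ for $\omega(t)=e^{-a|t|}$ and of $\E[X_1^2/(1+X_1^2)^2]$ make the second moment finite even though $\E[X_1^2]=\infty$). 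The Hilbert-space CLT then gives $\widetilde{Z}_n \overset{D}{\longrightarrow} \widetilde{\mathcal{Z}}$, a centred Gaussian element whose covariance kernel is the $\E[\,\cdot\,]$ of the product of the two summand functions evaluated at $s$ and $t$; a direct computation of that expectation against the standard Cauchy density yields the leading term $\tfrac12(s^2+t^2+|s-t|+1)e^{-|s-t|}$ in $K(s,t)$.

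Next I would perform a Taylor expansion of the summand of $Z_n$ in the estimated parameters. Writing $Y_{n,j} = (X_j-\widehat\alpha_n)/\widehat\beta_n$ and expanding the map $(\alpha,\beta)\mapsto (\tfrac{2y}{1+y^2}+t)\cos(ty)+(t-\tfrac{2y}{1+y^2})\sin(ty)$ with $y=(x-\alpha)/\beta$ around $(\alpha,\beta)=(0,1)$, I get
\begin{equation*}
Z_n(t) = \widetilde{Z}_n(t) - \sqrt{n}\,\widehat\alpha_n \cdot A(t) - \sqrt{n}(\widehat\beta_n-1)\cdot B(t) + \text{remainder},
\end{equation*}
where $A(t)$ and $B(t)$ are the $\mathbb{H}$-valued limits (in probability) of the empirical averages of the respective partial derivatives — by the law of large numbers these converge to deterministic functions that, after using the linear representations \eqref{eq:psi_11}--\eqref{eq:psi_21}, produce exactly the cross terms and the $\E[\psi_1^2]$, $\E[\psi_2^2]$ terms in $K(s,t)$. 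The functions $A(t), B(t)$ should come out proportional to $(t^2+|t|+1)e^{-|t|}$ and $t(|t|+1)e^{-|t|}$ respectively, matching the structure visible in the stated kernel; I would verify this by computing $\E[\partial_\alpha(\cdots)|_{(0,1)}]$ and $\E[\partial_\beta(\cdots)|_{(0,1)}]$ against the Cauchy density (these are elementary Fourier-type integrals of rational functions times trigonometric functions).

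Then I would assemble: the triple $(\widetilde{Z}_n, \tfrac{1}{\sqrt n}\sum\psi_1(X_j), \tfrac{1}{\sqrt n}\sum\psi_2(X_j))$ converges jointly in $\mathbb{H}\times\R\times\R$ to a jointly Gaussian limit (again by the Hilbert-space CLT applied to the stacked i.i.d.\ elements), the continuous linear map $(f,u,v)\mapsto f - uA - vB$ sends this to a Gaussian element of $\mathbb{H}$, and its covariance kernel is precisely $K(s,t)$ after expanding the quadratic form — the sign pattern ($-$ for the $\psi_1$-cross terms, $+$ for the $\psi_2$-cross terms, $+$ for the squares) is reproduced automatically. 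Finally I would control the remainder: the second-order Taylor terms are bounded by $C(\widehat\alpha_n^2+(\widehat\beta_n-1)^2)\cdot W_n$ where $W_n$ is an empirical average of integrable functions of the $X_j$, hence $o_{\mathbb{P}}(1)$ in $\mathbb{H}$-norm since $\widehat\alpha_n,\widehat\beta_n-1 = O_{\mathbb{P}}(n^{-1/2})$; I would also need uniform (in a neighborhood) integrability of the derivative bounds to justify replacing the empirical averages of derivatives by their expectations, which requires an envelope argument.

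The main obstacle I anticipate is the remainder control, specifically verifying that the second-order (and mixed) Taylor coefficients have integrable envelopes with respect to the Cauchy law despite the heavy tails: one must check that every derivative of $\tfrac{2y}{1+y^2}$ composed with the rescaling, multiplied by polynomial-in-$t$ factors and integrated against $\omega$, produces functions of $x$ that are $\E$-integrable under C$(0,1)$. Because $\tfrac{2y}{1+y^2}$ and all its $y$-derivatives are bounded and decay like $y^{-1}$ or faster, while the polynomial-in-$t$ factors are absorbed by $\int t^k e^{-a|t|}\,dt < \infty$, this should go through, but it demands care in bookkeeping the chain rule for $y=(x-\alpha)/\beta$ and keeping $\beta$ bounded away from $0$ on the relevant event (which holds with probability tending to one by consistency of $\widehat\beta_n$). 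A secondary technical point is justifying the interchange of limit and integral when identifying the covariance kernel, handled by dominated convergence using the same envelopes.
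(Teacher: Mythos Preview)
Your proposal is correct and follows essentially the same route as the paper: first-order Taylor expansion of $Z_n$ around $(\alpha,\beta)=(0,1)$, replacement of the empirical partial-derivative averages by their Cauchy expectations (which the paper computes as $\E[g(t,X_1)]=-\tfrac12(t^2+|t|+1)e^{-|t|}$ and $\E[X_1g(t,X_1)]=\tfrac12 t(|t|+1)e^{-|t|}$, matching your $A(t),B(t)$), substitution of the linear representations \eqref{eq:psi_11}--\eqref{eq:psi_21}, and application of the Hilbert-space CLT to the resulting i.i.d.\ sum. The only organizational difference is that the paper directly builds the approximating i.i.d.\ sum $\widetilde Z_n$ incorporating the $\psi_1,\psi_2$ terms, whereas you phrase it as joint convergence in $\mathbb H\times\R^2$ followed by a continuous linear map; these are equivalent, and your more explicit second-order remainder bookkeeping is if anything more careful than the paper's rather terse ``$Z_n(t)=Z_n^*(t)+o_{\mathbb P}(1)$'' step.
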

A proof of Theorem \ref{thm:asympVer} is found in Appendix \ref{app:proofasy1}. An application of the continuous mapping theorem states the following corollary.

\begin{corollary}\label{cor:H0asy}
We have as $n\rightarrow\infty$ 
\[T_n\overset{D}{\longrightarrow}\int_{-\infty}^\infty \mathcal{Z}^2(t)\omega(t)dt=\Vert \mathcal{Z}\Vert_{\mathbb{H}}^2.\]
\end{corollary}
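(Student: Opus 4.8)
The plan is to derive Corollary \ref{cor:H0asy} as a direct consequence of Theorem \ref{thm:asympVer} together with the continuous mapping theorem for Hilbert-space-valued random elements. First I would recall that $T_n = \|Z_n\|_{\mathbb{H}}^2$, which is exactly the identity stated just above Theorem \ref{thm:asympVer}: by the symmetry of $\omega$ the imaginary part of the empirical expression in \eqref{eq:statistic} integrates out, leaving $T_n = \int_{-\infty}^\infty Z_n^2(t)\,\omega(t)\,\mathrm{d}t$ with $Z_n$ as in \eqref{eq:Zn}. Thus $T_n = \Phi(Z_n)$ where $\Phi : \mathbb{H} \to \R$ is the squared-norm functional $\Phi(f) = \|f\|_{\mathbb{H}}^2$.

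Next I would observe that $\Phi$ is continuous on $\mathbb{H}$: indeed $|\,\|f\|_{\mathbb{H}}^2 - \|g\|_{\mathbb{H}}^2\,| = |\,\|f\|_{\mathbb{H}} - \|g\|_{\mathbb{H}}\,|\cdot(\|f\|_{\mathbb{H}} + \|g\|_{\mathbb{H}}) \le \|f-g\|_{\mathbb{H}}\,(\|f\|_{\mathbb{H}} + \|g\|_{\mathbb{H}})$ by the reverse triangle inequality, so $\Phi$ is (locally Lipschitz, hence) continuous everywhere on $\mathbb{H}$. Since Theorem \ref{thm:asympVer} gives $Z_n \overset{D}{\longrightarrow} \mathcal{Z}$ in $\mathbb{H}$ as $n \to \infty$, the continuous mapping theorem for convergence in distribution on a separable metric space (here the separable Hilbert space $\mathbb{H}$) yields $\Phi(Z_n) \overset{D}{\longrightarrow} \Phi(\mathcal{Z})$, that is,
\[
T_n \overset{D}{\longrightarrow} \int_{-\infty}^\infty \mathcal{Z}^2(t)\,\omega(t)\,\mathrm{d}t = \|\mathcal{Z}\|_{\mathbb{H}}^2,
\]
which is the claim.

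The only point requiring a word of care — and what I would flag as the single nontrivial step — is justifying that the relevant map is genuinely continuous as a functional on $\mathbb{H}$ and that $\mathbb{H}$ is separable so that the continuous mapping theorem applies in the Hilbert-space setting; both are standard ($L^2$ of a $\sigma$-finite measure is separable, and the norm functional is continuous), so no real obstacle arises. One may additionally remark that since $\mathcal{Z}$ is a centred Gaussian element of $\mathbb{H}$, the limit $\|\mathcal{Z}\|_{\mathbb{H}}^2$ admits the usual representation $\sum_{j\ge 1}\lambda_j N_j^2$ as an infinite weighted sum of independent $\chi_1^2$ variables, where $(\lambda_j)_{j\ge1}$ are the eigenvalues of the integral operator on $\mathbb{H}$ associated with the kernel $K$ of Theorem \ref{thm:asympVer}; this is not needed for the corollary itself but clarifies the nature of the limiting law.
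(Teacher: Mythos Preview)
Your proof is correct and follows exactly the same route as the paper, which simply states that the corollary is ``an application of the continuous mapping theorem'' to the conclusion of Theorem~\ref{thm:asympVer}. Your write-up supplies the details the paper omits (that $T_n=\|Z_n\|_{\mathbb{H}}^2$ and that the squared norm is continuous on $\mathbb{H}$), but the argument is the same.
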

It is well known that the distribution of $\Vert \mathcal{Z}\Vert_{\mathbb{H}}^2$ is that of $\sum_{j=1}^\infty \lambda_jN_j^2$, where $N_1,N_2,\ldots$ are i.i.d. standard normal random variables and $(\lambda_j)$ is a decreasing sequence of positive eigenvalues of the integral operator
\begin{equation*}
    \mathcal{K}g(s)=\int_{-\infty}^\infty K(s,t)g(t)\omega(t)\,\mbox{d}t.
\end{equation*}
Due to the complexitiy of the covariance kernel $K$ it seems hopeless to solve the integral equation $\mathcal{K}g(s)=\lambda g(s)$ and find explicit values of $\lambda_j$, $j\ge1$. For a numerical approximation method we refer to Subsection 3.3 in \cite{MT:2005}. Note that 
\begin{equation}
    \E\Vert \mathcal{Z}\Vert_{\mathbb{H}}^2=\int_{-\infty}^\infty K(t,t)\omega(t)\,\mbox{d}t
\end{equation}
and 
\begin{equation}
    \mbox{Var}\Vert \mathcal{Z}\Vert_{\mathbb{H}}^2=2\int_{-\infty}^\infty\int_{-\infty}^\infty K^2(s,t)\omega(t)\omega(s)\,\mbox{d}t\mbox{d}s
\end{equation}
can be derived for specific estimation procedures and weight functions. Such results in the theory of goodness-of-fit tests for the Cauchy family are sparse, for some explicit formulae for mean values, see \cite{GH:2000} and \cite{MT:2005}.

\begin{remark}\label{rem:est} The generality of Theorem \ref{thm:asympVer} in view of the linear representations of the estimators and hence dependence on the functions $\psi_1$ and $\psi_2$ leads to explicit covariance kernels for different parameter estimation procedures. To estimate $\alpha$ and $\beta$ we choose the following location and scale estimators $\widehat{\alpha}_n$ and $\widehat{\beta}_n$, which all satisfy \eqref{eq:alpha} and \eqref{eq:beta} respectively. For a compact notation, we write $\psi(x)=(\psi_1(x),\psi_2(x))^\top$ and $I_2$ for the 2-dimensional identity matrix. Some derivations were partially provided by the computer algebra system Maple, see \cite{Maple2019}.
\begin{enumerate}
\item \textbf{Median and interquartile-distance estimators:} Let $\xi_p$, $p\in(0,1)$, denote the $p$-quantile of the underlying distribution $F$, $\widehat{\xi}_{p,n}$ the sample $p$-quantile, and $X_{(1)}\le\cdots\le X_{(n)}$ the order statistics of $X_1,\ldots,X_n$. With $\lfloor\cdot\rfloor$ denoting the floor function, let
\begin{equation}\label{eq:medest}
\widehat{\alpha}_n=\begin{cases}
\frac{1}{2}(X_{(\frac{n}{2})}+X_{(\frac{n}{2}+1)}), & \text{if}\,n\,\mbox{even,}\\
X_{(\lfloor\frac{n}{2}\rfloor+1)}, & \text{otherwise,}
\end{cases}
\end{equation}
be the unbiased empirical median and 
\begin{equation}\label{eq:IQRest}
\widehat{\beta}_n=\frac{1}{2}(\widehat{\xi}_{\frac{3}{4},n}-\widehat{\xi}_{\frac{1}{4},n})
\end{equation}
the half-interquartile range (iqr) of the sample. Under mild regularity conditions $\widehat{\alpha}_n$ and  $\widehat{\beta}_n$ are consistent estimators of $\alpha$ and $\beta$. Display (3.3) of \cite{GH:2000} then gives the so-called Bahadur representations (see Theorem 2.5.1 in \cite{S:1980}) with
\[\psi_1(x)=\pi\left(\frac{1}{2}-\mathbf{1}\lbrace x\leq 0\rbrace\right),\quad \mbox{and}\quad \psi_2(x)=\pi\left(\frac{1}{2}-\mathbf{1}\lbrace-1\leq x\leq 1\rbrace\right),\ x\in\R.\]
It is easy to see that $\E[\psi_1(X_1)]=\E[\psi_2(X_1)]=0$ and $\E[\psi(X_1)\psi(X_1)^\top]=\frac{\pi^2}{4}I_{2}$ holds.
With these representations, we get the covariance kernel of $\mathcal{Z}$ in Theorem \ref{thm:asympVer} 
\begin{align*}
\label{KMIS}
K_{MIQ}(s,t)=&\frac{1}{2}\big(s^2+t^2+\vert s-t\vert+1\big)e^{-\vert s-t\vert}\\
&+\Big[t\big(\vert t\vert+1\big)\big(2J_1(s)-sJ_2(s)\big)-\big(t^2+\vert t\vert+1\big)\big(\frac{s}{2}J_3(s)+J_4(s)\big)\Big]e^{-\vert t\vert}\\
&+\Big[s\big(\vert s\vert+1\big)\big(2J_1(t)-tJ_2(t)\big)-\big(s^2+\vert s\vert+1\big)\big(\frac{t}{2}J_3(t)+J_4(t)\big)\Big]e^{-\vert s\vert}\\
&+\frac{\pi^2}{16}\Big[\big(s^2+\vert s\vert+1\big)\big(t^2+\vert t\vert+1\big)+st\big(\vert s\vert+1\big)\big(\vert t\vert+1\big)\Big]e^{-\vert s\vert-\vert t\vert},\quad s,t\in\R,
\end{align*}
where 
\begin{equation*}
J_1(t)=\int_0^1\frac{x\sin(tx)}{(1+x^2)^2}dx, \ 
J_2(t)=\int_0^1\frac{\cos(tx)}{1+x^2}dx, \ 
J_3(t)=\int_0^\infty\frac{\sin(tx)}{1+x^2}dx, \
J_4(t)=\int_0^\infty\frac{x\cos(tx)}{(1+x^2)^2}dx.
\end{equation*}
Direct calculations of integrals using the weight function $\omega_a(t)$ of the introduction lead to 
\begin{eqnarray*}
    \E\|\mathcal{Z}\|_{\mathbb{H}}^2&=&\int_{-\infty}^\infty K_{MIQ}(t,t)\omega_a(t)\,\mbox{d}t\\
    &=&(8 \left( a+2 \right) ^{5} \left( 1+a
 \right) ^{3}{a}^{3} \left( {a}^{2}+2\,a+2 \right) ^{3})^{-1}\left[\left( {\pi}^{2}-8 \right) {a}^{16}+ \left( 19\,{
\pi}^{2}-152 \right) {a}^{15}\right.\\&&+ \left( 173\,{\pi}^{2}-1368 \right) {a}^
{14}+ \left( 1003\,{\pi}^{2}-7720 \right) {a}^{13}+ \left( 4126\,{\pi}
^{2}-30192 \right) {a}^{12}\\&&+ \left( 12594\,{\pi}^{2}-84304 \right) {a}
^{11}+ \left( 29128\,{\pi}^{2}-163520 \right) {a}^{10}+ \left( 51460\,
{\pi}^{2}-188832 \right) {a}^{9}\\&&+ \left( 69320\,{\pi}^{2}-8256
 \right) {a}^{8}+ \left( 70296\,{\pi}^{2}+457664 \right) {a}^{7}+
 \left( 52176\,{\pi}^{2}+1025920 \right) {a}^{6}\\&&+ \left( 26848\,{\pi}^
{2}+1323264 \right) {a}^{5}+ \left( 8576\,{\pi}^{2}+1151488 \right) {a
}^{4}+ \left( 1280\,{\pi}^{2}+693248 \right) {a}^{3}\\&&\left.+280576\,{a}^{2}+
69632\,a+8192\right].
\end{eqnarray*}

\item \textbf{Maximum likelihood estimators:} \cite{MT:2005} show in Lemma A.1 that for the maximum-likelihood estimator $\widehat{\alpha}_n$ and $\widehat{\beta}_n$ in the Cauchy family the linear representations are given by
\[\psi_1(x)=\frac{4x}{1+x^2},\quad \mbox{and}\quad\psi_2(x)=\frac{2(x^2-1)}{1+x^2},\;x\in\R.\]
Again, straightforward calculations show $\E[\psi_1(X_1)]=\E[\psi_2(X_1)]=0$ and $\E[\psi(X_1)\psi(X_1)^\top]=2 I_{2}$. Note that there are no closed form expressions for the estimators, such that the log-likelihood equations have to be solved numerically. This leads to the covariance kernel of $\mathcal{Z}$ in Theorem \ref{thm:asympVer} given by
\begin{align*}
\label{KMLS}
K_{ML}(s,t) = & \frac{1}{2}\big(s^2+t^2+\vert s-t\vert+1\big)e^{-\vert s-t\vert}-\frac{1}{2}(t^2+\vert t\vert+1)(s^2+\vert s\vert +1)e^{-\vert t\vert-\vert s\vert}\\&-\frac{1}{2}t(\vert t\vert+1)s(\vert s\vert+1)e^{-\vert t\vert-\vert s\vert},\quad s,t\in\R.
\end{align*}
With this explicit formula for the covariance kernel, we compute for the weight function $w_a(t)$ in the introduction
\begin{equation*}
    \E\|\mathcal{Z}\|_{\mathbb{H}}^2=\int_{-\infty}^\infty K_{ML}(t,t)\omega_a(t)\,\mbox{d}t={\frac {8\,{a}^{4}+80\,{a}^{3}+352\,{a}^{2}+320\,a+128}{{a}^{3}
 \left( a+2 \right) ^{5}}}
\end{equation*}
and
\begin{eqnarray*}
\mbox{Var}\|\mathcal{Z}\|_{\mathbb{H}}^2&=&2\int_{-\infty}^\infty\int_{-\infty}^\infty K_{ML}^2(s,t)\omega_a(s)\omega_a(t)\,\mbox{d}t\mbox{d}s\\&=&
\left(\frac12{a}^{5}
 \left( a+1 \right) ^{7} \left( a+2 \right) ^{10}\right)^{-1}\left[10\,{a}^{14}+270\,{a}^{13}+3521\,{a}^{12}+27987\,{a}^{11}+
146819\,{a}^{10}\right.\\&&+510582\,{a}^{9}+1194078\,{a}^{8}+1914216\,{a}^{7}+
2134432\,{a}^{6}+1671456\,{a}^{5}+928192\,{a}^{4}\\&&\left.+369792\,{a}^{3}+
104192\,{a}^{2}+18432\,a+1536\right].
\end{eqnarray*}

\item \textbf{Equivariant integrated squared error estimator:} In \cite{MT:2005} propose an equivariant version of the minimal integrated squared error estimator introduced by \cite{BM:2001} for the Cauchy distribution. The estimators are derived by minimization of the weighted $L^2$-distance 
\begin{equation*}
I(\alpha,\beta)=\int_{-\infty}^{\infty}\vert\varphi_n(t;\alpha,\beta)-e^{-\vert t\vert}\vert^2\omega(t)dt,
\end{equation*}
where $\varphi_n(t;\alpha,\beta)=\frac{1}{n}\sum_{j=1}^n \exp\Big(it\frac{X_j-\alpha}{\beta}\Big)$, $t\in\R$, is the empirical characteristic function of a distribution from the location scale family in dependence of the parameters. As weight function the authors chose $\omega_\nu(t)=\exp(-\nu|t|)$, $t\in\R$, $\nu>0$, and hence get families of estimators $\widehat{\alpha}_{n,\nu}$ and $\widehat{\beta}_{n,\nu}$ in dependence of $\nu$. Note that the optimization problem has to be solved numerically and no closed-form formula is known for the estimators. We denote this class of estimators hereinafter EISE. Lemma A.1 in \cite{MT:2005} provides the linear asymptotic representations
\begin{equation*}
\psi_1(x,\nu)=(\nu+1)(\nu+2)^3\frac{x}{((\nu+1)^2+x^2)^2}  \ \ \mbox{and} \ \ \psi_2(x,\nu)=\frac{1}{2}(\nu+2)-\frac{1}{2}(\nu+2)^3\frac{(\nu+1)^2-x^2}{((\nu+1)^2+x^2)^2}
\end{equation*}
for $x\in\R$, which lead for every $\nu>0$ to the covariance kernel of $\mathcal{Z}$ in Theorem \ref{thm:asympVer}
\begin{align*}
\label{KEISE}
K_{EISE}(s,t;\nu)&= \frac{1}{2}\big(s^2+t^2+\vert s-t\vert+1\big)e^{-\vert s-t\vert}\\
&-\frac{1}{2}(t^2+\vert t\vert+1)(\nu+1)\bigg(\frac{2\vert s\vert(\nu+2)\nu^3+(\nu+1)(1-\vert s\vert)+\vert s\vert+3}{\nu^3}\bigg)e^{-\vert s\vert-\vert t\vert}\\
&+\frac{1}{2}(t^2+\vert t\vert+1)\bigg(\frac{(\nu+1)(\vert s\vert(\nu+1)^2+3(\nu+1)-\vert s\vert)-1}{\nu^3}\\
&\ \ \ \ \ \ +(\nu+1)(s^2(\nu+2)(\nu+1)+2\vert s\vert(\nu+2)+s^2)\bigg)e^{-(\nu+1)\vert s\vert-\vert t\vert}\\
&-\frac{1}{2}t(\vert t\vert+1)(\nu+1)\bigg(\frac{s(\nu+2)^3(\nu+1)}{2\nu^2}+s(\nu+1)^2+s-4\sgn(s)\bigg)e^{-\vert s\vert-\vert t\vert}\\
&+\frac{1}{2}t(\vert t\vert+1)\bigg(\frac{s(\nu+2)^3(\vert s\vert(\nu+1)^3-3(\nu+1)^2+\vert s\vert(\nu+1)+1)}{4(\nu+1)\nu^2}\\
&\ \ \ \ \ \ -s(\nu+1)^2+s-4\sgn(s)(\nu+1)\bigg)e^{-\vert s\vert-\vert t\vert}\\
&+\frac{1}{2\pi}t(\vert t\vert+1)(\nu+2)^3\big(sJ_1(s)-2J_2(s)\big)e^{-\vert t\vert}\\
&-\frac{1}{2}(s^2+\vert s\vert+1)(\nu+1)\bigg(\frac{2\vert t\vert(\nu+2)\nu^3+(\nu+1)(1-\vert t\vert)+\vert t\vert+3}{\nu^3}\bigg)e^{-\vert s\vert-\vert t\vert}\\
&+\frac{1}{2}(s^2+\vert s\vert+1)\bigg(\frac{(\nu+1)(\vert t\vert(\nu+1)^2+3(\nu+1)-\vert t\vert)-1}{\nu^3}\\
&\ \ \ \ \ \ +(\nu+1)(t^2(\nu+2)(\nu+1)+2\vert t\vert(\nu+2)+t^2)\bigg)e^{-(\nu+1)\vert s\vert-\vert t\vert}\\
&-\frac{1}{2}s(\vert s\vert+1)(\nu+1)\bigg(\frac{t(\nu+2)^3(\nu+1)}{2\nu^2}+t(\nu+1)^2+t-4\sgn(t)\bigg)e^{-\vert s\vert-\vert t\vert}\\
&+\frac{1}{2}s(\vert s\vert+1)\bigg(\frac{t(\nu+2)^3(\vert t\vert(\nu+1)^3-3(\nu+1)^2+\vert t\vert(\nu+1)+1)}{4(\nu+1)\nu^2}\\
&\ \ \ \ \ \ -t(\nu+1)^2+t-4\sgn(t)(\nu+1)\bigg)e^{-\vert s\vert-\vert t\vert}\\
&+\frac{1}{2\pi}s(\vert s\vert+1)(\nu+2)^3\big(tJ_1(t)-2J_2(t)\big)e^{-\vert s\vert}\\
& \hspace*{-5mm} +\big((s^2+\vert s\vert+1)(t^2+\vert t\vert+1)+s(\vert s\vert+1)t(\vert t\vert+1)\big)\frac{(\nu+2)^2(5\nu^2+14\nu+10)}{64(\nu+1)^3}\cdot e^{-\vert s\vert-\vert t\vert},
\end{align*}
where $\sgn(\cdot)$ is the sign function, and 
\begin{equation*}
J_1(t;\nu)=\int_0^{\infty}\frac{x^2\cos(tx)}{(1+x^2)((\nu+1)^2+x^2)^2}\mbox{d}x\quad \mbox{and}\quad J_2(t;\nu)=\int_0^{\infty}\frac{x^3\sin(tx)}{(1+x^2)^2((\nu+1)^2+x^2)^2}\mbox{d}x.
\end{equation*}

\end{enumerate}

\end{remark}

As demonstrated in Remark \ref{rem:est}, the estimation procedure has some influence on the limit null distribution of $T_n$. We refer to \cite{C:2011,F:2013} for alternative estimators of the parameters of the Cauchy distribution and to \cite{CF:2006} for an Pitman estimator of the location parameter $\alpha$ when $\beta$ is known.

\subsection{Asymptotic normality of the limit statistic under the null hypothesis}
The next result gives the limiting distribution of the scaled limiting statistic $aT_{n,a}$ for $a\to 0$ given in (\ref{stat:lim1}).

\begin{theorem}\label{thmlimit2} 
\begin{enumerate}
\item[a)] 
Let $\arcsine(a,b)$ denote the arcsine distribution on $[a,b]$ with distribution function 
\[
F(x)=\frac{2}{\pi}\arcsin\sqrt{\frac{x-a}{b-a}},a\leq x\leq b.
\] 
If $X\sim C(0,1)$, then
\begin{align*}
\frac{4X^2}{(1+X^2)^2} &\sim \arcsine(0,1).
\end{align*}
\item[b)]
Let $X_1,...,X_n$ be i.i.d. $\mbox{C}(0,1)$ distributed random variables, and let $Y_{n,j}$ be the corresponding scaled residuals, using  any estimator $(\hat\alpha,\hat\beta)$ with linear representation. Then,
\begin{align} \label{stat:lim3}
T_{n,0} &= \sqrt{2n} \left( \frac{8}{n} \sum_{j=1}^n \frac{Y_{n,j}^2}{1+Y_{n,j}^2} - 1\right) \ \cd \ N(0,1),
\end{align}
where $N(0,1)$ denotes the standard normal distribution.
\end{enumerate}
\end{theorem}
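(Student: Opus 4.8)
The plan is to handle the two parts by quite different means: part a) by an explicit distributional identity, part b) by a central limit theorem together with a perturbation (Slutsky) argument that strips off the estimated parameters. For part a), I would exploit that $X\sim C(0,1)$ may be written as $X=\tan U$ with $U$ uniform on $(-\pi/2,\pi/2)$. Then
\[
\frac{2X}{1+X^2}=\frac{2\tan U}{1+\tan^2U}=2\sin U\cos U=\sin(2U),
\]
so that $\frac{4X^2}{(1+X^2)^2}=\sin^2(2U)$. Since $\sin^2$ has period $\pi$ and $2U\bmod\pi$ is uniform on $[0,\pi)$, for $0\le x\le1$ one has $\PP\bigl(\sin^2(2U)\le x\bigr)=\PP(|\sin W|\le\sqrt x)$ with $W$ uniform on $(0,\pi)$, and evaluating this probability directly gives $\frac2\pi\arcsin\sqrt x$, which is precisely the distribution function of $\arcsine(0,1)$. (Alternatively one may obtain the density $\frac{1}{\pi\sqrt{y(1-y)}}$ on $(0,1)$ from the change-of-variables formula, taking care of the two preimages of $x\mapsto\frac{4x^2}{(1+x^2)^2}$.)

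For part b), write $g(x)=\frac{x^2}{(1+x^2)^2}$, the function appearing in \eqref{stat:lim1}, so that $T_{n,0}=\sqrt{2n}\bigl(\tfrac8n\sum_{j=1}^ng(Y_{n,j})-1\bigr)$. By part a) the variable $8g(X_1)=\frac{8X_1^2}{(1+X_1^2)^2}$ is twice an $\arcsine(0,1)$ variable, hence has mean $1$ and variance $4\cdot\frac18=\frac12$; being in addition bounded, the ordinary i.i.d. central limit theorem yields $\sqrt{2n}\bigl(\tfrac8n\sum_{j=1}^ng(X_j)-1\bigr)\cd N(0,1)$. Since
\[
T_{n,0}-\sqrt{2n}\Bigl(\tfrac8n\sum_{j=1}^ng(X_j)-1\Bigr)=\frac{8\sqrt2}{\sqrt n}\,R_n,\qquad R_n:=\sum_{j=1}^n\bigl(g(Y_{n,j})-g(X_j)\bigr),
\]
it suffices, by Slutsky's lemma, to show $R_n=o_{\PP}(\sqrt n)$. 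To that end I would Taylor expand $g(Y_{n,j})=g(X_j)+g'(X_j)(Y_{n,j}-X_j)+\tfrac12g''(\xi_{n,j})(Y_{n,j}-X_j)^2$ with $\xi_{n,j}$ between $X_j$ and $Y_{n,j}$, insert $Y_{n,j}-X_j=-\widehat\alpha_n/\widehat\beta_n-(\widehat\beta_n-1)X_j/\widehat\beta_n$, and use that \eqref{eq:psi_11}--\eqref{eq:psi_21} give $\widehat\alpha_n=O_{\PP}(n^{-1/2})$, $\widehat\beta_n-1=O_{\PP}(n^{-1/2})$ and $\widehat\beta_n\cp1$.

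The linear term is $-\frac{\widehat\alpha_n}{\widehat\beta_n}\sum_jg'(X_j)-\frac{\widehat\beta_n-1}{\widehat\beta_n}\sum_jX_jg'(X_j)$. As $g'$ is odd and $X_1$ symmetric, $\E[g'(X_1)]=0$; moreover a short computation (reducing to Beta integrals, or integrating by parts against the standard Cauchy density $f$, for which $f(x)+xf'(x)=\frac{1-x^2}{\pi(1+x^2)^2}$) shows the further identity $\E[X_1g'(X_1)]=\frac2\pi\int_{-\infty}^\infty\frac{x^2(1-x^2)}{(1+x^2)^4}\,dx=0$. Since $g'$ and $x\mapsto xg'(x)$ are bounded, the central limit theorem gives $\sum_jg'(X_j)=O_{\PP}(\sqrt n)$ and $\sum_jX_jg'(X_j)=O_{\PP}(\sqrt n)$, whence the linear term is $O_{\PP}(n^{-1/2})\,O_{\PP}(\sqrt n)=O_{\PP}(1)$. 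For the quadratic term I would use the decay bound $|g''(x)|\le C(1+x^2)^{-2}$ together with the elementary observation that on the event $\{\widehat\beta_n\in[\tfrac12,2],\,|\widehat\alpha_n|\le1\}$, whose probability tends to $1$, one has $1+\xi_{n,j}^2\ge c\,(1+X_j^2)$ for an absolute constant $c>0$; combined with $(Y_{n,j}-X_j)^2\le C(\widehat\alpha_n^2+(\widehat\beta_n-1)^2X_j^2)$ and the boundedness of $(1+x^2)^{-2}$ and of $x^2(1+x^2)^{-2}$, this yields $|g''(\xi_{n,j})|(Y_{n,j}-X_j)^2\le C'(\widehat\alpha_n^2+(\widehat\beta_n-1)^2)$ uniformly in $j$, hence $\sum_j|g''(\xi_{n,j})|(Y_{n,j}-X_j)^2\le C'n(\widehat\alpha_n^2+(\widehat\beta_n-1)^2)=O_{\PP}(1)$. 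Therefore $R_n=O_{\PP}(1)=o_{\PP}(\sqrt n)$, and \eqref{stat:lim3} follows by Slutsky.

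The main obstacle is the quadratic term. Because $X_1$ is Cauchy, $\sum_jX_j^2$ is only of order $n^2$, so a naive Lipschitz bound $|g(Y_{n,j})-g(X_j)|\le L|Y_{n,j}-X_j|$ is far too lossy; one really has to exploit that $g''$ decays like $|x|^{-4}$ (equivalently, that $x^2(1+x^2)^{-2}$ is bounded) in order to absorb these heavy tails. A second, more conceptual point is the vanishing of $\E[X_1g'(X_1)]$: were this expectation nonzero, the scale estimator would contribute a non-negligible $O_{\PP}(\sqrt n)$ bias to $R_n$ and the limit in \eqref{stat:lim3} would depend on the particular estimator used, contradicting the statement's "any estimator with linear representation".
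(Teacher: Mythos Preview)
Your proposal is correct and follows essentially the same route as the paper: part a) via the representation $X=\tan U$ and the identity $4X^2/(1+X^2)^2=\sin^2(2U)$, and part b) via a Taylor expansion together with the key vanishing $\E[g'(X_1)]=\E[X_1g'(X_1)]=0$ and the CLT applied to the bounded arcsine-distributed variable $8g(X_1)$. The only cosmetic difference is in the remainder: the paper expands to second order in $(\alpha,\beta)$ with explicit coefficients and then asserts the higher remainder is $o_\PP(1)$, whereas you stop at first order and control the Lagrange remainder directly via $|g''(x)|\le C(1+x^2)^{-2}$ and a comparison $1+\xi_{n,j}^2\ge c(1+X_j^2)$---arguably a bit tidier, but the same idea.
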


\begin{proof}
\begin{enumerate}
\item[a)]
Let $U$ have a uniform distribution on $(-\pi/2,\pi/2)$, and put $X=\tan U$. Then, $X\sim C(0,1)$. Further, $\sin U, \, X^2/(1+X^2)=\sin^2 U, \, 1/(1+X^2)=\cos^2 U$ as well as $2\sin U \cos U$ have an $\arcsine(-1,1)$ distribution (see, e.g., \cite{N:1983}). If $Z \sim \arcsine(-1,1)$, then $Z^2 \sim \arcsine(0,1)$. Hence,
\begin{align*}
\frac{4X^2}{(1+X^2)^2} &\sim 4 \sin^2 U\cos^2 U \sim \arcsine(0,1).
\end{align*}
\item[b)]
A second order Taylor expansion around $(\alpha_0,\beta_0)=(0,1)$ yields
\begin{align*} 
\frac{1}{\sqrt{n}} & \sum_{j=1}^n \frac{(X_j-\alpha)^2/\beta^2}{(1+(X_j-\alpha)^2/\beta^2)^2}
= \frac{1}{\sqrt{n}} \sum_{j=1}^n \left\{ 
  \frac{X_j^2}{(1+X_j^2)^2)} 
+ \frac{2X_j(X_j^2-1)\alpha}{(1+X_j^2)^3} 
+ \frac{2X_j^2(X_j^2-1)(\beta-1)}{(1+X_j^2)^3}  \right. \\
& \left. + \frac{3X_j^4-8X_j^2+1)\alpha^2}{(1+X_j^2)^4} 
+ \frac{4X_j(X_j^4-4X_j^2+1)\alpha(\beta-1)}{(1+X_j^2)^4} 
+ \frac{X_j^2(X_j^4-8X_j^2+3)(\beta-1)^2}{(1+X_j^2)^4} \right\} + R_n,
\end{align*}
where $R_n\cp 0$ for $n\to\infty.$ If we replace $(\alpha,\beta)$ by any estimator $(\hat\alpha,\hat\beta)$ with linear representation, the second and third term converge to zero in probability, since, by the results given in the proof of part a),
\begin{align*} 
\frac{1}{n} \sum_{j=1}^n \frac{X_j(X_j^2-1)}{(1+X_j^2)^3} &\fse \E\frac{X_1(X_1^2-1)}{(1+X_1^2)^3} \ = \ 0, \\
\frac{1}{n} \sum_{j=1}^n \frac{X_j^2 (X_j^2-1)}{(1+X_j^2)^3} &\fse \E\frac{X_1^2(X_1^2-1)}{(1+X_1^2)^3} \ = \ 0,
\end{align*}
and $\sqrt{n}\hat\alpha$ and $\sqrt{n}(\hat\beta-1)$ have limiting normal distributions. Since the corresponding arithmetic means of the second order terms converge almost surely to finite values, the 4th, 5th and 6th term also converge to zero in probability. Therefore,
\begin{align*} 
\frac{8}{\sqrt{n}} \sum_{j=1}^n  \frac{Y_{n,j}^2}{(1+Y_{n,j}^2)^2}
&= \frac{8}{\sqrt{n}} \sum_{j=1}^n \frac{X_j^2}{(1+X_j^2)^2} + \tilde{R}_n,
\end{align*}
where $\tilde{R}_n\cp 0$ for $n\to\infty.$ By a), $8X_1^2/(1+X_1^2)^2$ has expected value 1 and variance 1/2, and the assertion follows by the central limit theorem.

\end{enumerate}
\end{proof}

\begin{remark} \label{remark:Tinfty}
Note that the sum appearing on the right hand side of (\ref{stat:lim2}) involves essentially the first component of the score function of the Cauchy distribution (see the second part of Remark \ref{rem:est}). Hence, when using the maximum-likelihood estimator for $(\alpha,\beta)$, $\lim_{a\to\infty} aT_{n,a}\equiv 0$. For other estimators, the limit does not vanish, but a goodness-of-fit test based on it has very low power. Hence, we don't give further results for this statistic.
\end{remark}

\section{Consistency and behaviour under fixed alternatives}
In this section we show that the new tests are consistent against all alternatives satisfying a weak moment condition. Let $X,X_1,X_2,\ldots$ be i.i.d. random variables with cumulative distribution function $F$ having a unique median as well as unique upper and lower quartiles and $\E |X|^3/(1+X^2)^2<\infty$. Since the tests $T_n$ are affine invariant, we assume w.l.o.g. that the true median $\alpha=0$ and the half interquartile range $\beta=1$. Under these assumptions we clearly have that 
\begin{equation*}
    (\widehat{\alpha}_n,\widehat{\beta}_n)\overset{\PP}{\longrightarrow}(0,1),\quad\mbox{as}\;n\rightarrow\infty,
\end{equation*}
for the MIQ estimation method in Remark \ref{rem:est}, for details see \cite{GH:2000}, and assume the convergence in all other cases.

\begin{theorem}\label{thm:consistency}
Under the standing assumptions, we have
\[\frac{T_n}{n}\overset{\PP}{\longrightarrow}\int_{-\infty}^{\infty}\bigg\vert\E\bigg[\bigg(it-\frac{2X}{1+X^2}\bigg)e^{itX}\bigg]\bigg\vert^2\omega(t)dt=\Delta_F,\quad\mbox{as}\;n\longrightarrow\infty.\]
\end{theorem}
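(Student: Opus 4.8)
The plan is to show that $T_n/n = \|Z_n/\sqrt n\|_{\mathbb H}^2$ converges in probability to $\Delta_F$ by establishing convergence of $Z_n/\sqrt n$ in $\mathbb{H}$ to the deterministic element $z_F(t) := \E[(it - 2X/(1+X^2))e^{itX}]$ (or rather its real-part representation, matching \eqref{eq:Zn}) and then invoking continuity of the squared norm. First I would recall that $Z_n(t)/\sqrt n = \frac1n\sum_{j=1}^n w(Y_{n,j},t)$, where $w(y,t) = (2y/(1+y^2)+t)\cos(ty) + (t - 2y/(1+y^2))\sin(ty)$, and that $Y_{n,j} = (X_j - \widehat\alpha_n)/\widehat\beta_n$ with $(\widehat\alpha_n,\widehat\beta_n)\overset{\PP}{\to}(0,1)$ under the standing assumptions.

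The argument proceeds in two steps. Step one: replace the random scaling by the true parameters. Define $\widetilde Z_n(t)/\sqrt n = \frac1n\sum_{j=1}^n w(X_j,t)$. By the strong law of large numbers in the separable Hilbert space $\mathbb{H}$ (applicable once one checks $\E\|w(X,\cdot)\|_{\mathbb H} < \infty$, which follows from the moment condition $\E|X|^3/(1+X^2)^2 < \infty$ together with $\int \omega < \infty$ and the fact that $|w(y,t)| \le C(1+|t|)(1+|y|/(1+y^2))$ is dominated after multiplication by the integrable, rapidly-controlled weight — here one uses that $t^2\omega(t)$ and $t\,\omega(t)$ are integrable, which for $\omega_a$ is immediate and in general should be added as a mild assumption or derived from \eqref{omega} as stated), we get $\widetilde Z_n/\sqrt n \to z_F$ almost surely in $\mathbb{H}$, where $z_F(t) = \E[w(X,t)]$. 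Step two: control the difference $\|Z_n/\sqrt n - \widetilde Z_n/\sqrt n\|_{\mathbb H}$. Writing $Y_{n,j} = (X_j - \widehat\alpha_n)/\widehat\beta_n$ and performing a first-order Taylor expansion of $w(\cdot,t)$ in the parameter, one bounds this difference by $(|\widehat\alpha_n| + |\widehat\beta_n - 1|)$ times a factor of the form $\frac1n\sum_j \sup_{(\alpha,\beta)\in N}\|\partial_{(\alpha,\beta)} w((X_j-\alpha)/\beta,\cdot)\|_{\mathbb H}$ over a shrinking neighbourhood $N$ of $(0,1)$; the average converges in probability to a finite constant by the SLLN (again using the cubic moment condition to dominate the derivative terms, which carry an extra power of $Y_{n,j}$), and the prefactor is $o_\PP(1)$, so the whole difference is $o_\PP(1)$. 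Combining the two steps, $Z_n/\sqrt n \overset{\PP}{\to} z_F$ in $\mathbb{H}$, hence by the continuous mapping theorem $T_n/n = \|Z_n/\sqrt n\|_{\mathbb H}^2 \overset{\PP}{\to} \|z_F\|_{\mathbb H}^2 = \Delta_F$, where the last equality is just the identity $\|z_F\|_{\mathbb H}^2 = \int |\E[(it-2X/(1+X^2))e^{itX}]|^2\omega(t)\,dt$ obtained by the same symmetry-of-$\omega$ computation used to pass between $T_n$ and $\int Z_n^2\omega$.

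The main obstacle is the uniform integrable-domination needed in step two: one must show that the $\mathbb{H}$-norm of the parameter-gradient of $w((x-\alpha)/\beta,\cdot)$ is, uniformly over a neighbourhood of $(0,1)$, dominated by an integrable function of $x$. The gradient introduces terms behaving like $x^2/(1+x^2)^2$ and $x^3/(1+x^2)^2$ (from differentiating the $2y/(1+y^2)$ piece and from the $y\cos(ty)$, $y\sin(ty)$ pieces), which is exactly why the hypothesis $\E|X|^3/(1+X^2)^2 < \infty$ appears; keeping the neighbourhood of $(0,1)$ bounded away from $\beta = 0$ makes the substitution harmless. Everything else — the SLLN in Hilbert space, the continuous mapping theorem, the weight-function bookkeeping — is routine.
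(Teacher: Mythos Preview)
Your proposal is correct and follows essentially the same route as the paper: both arguments (i) apply the strong law of large numbers in $\mathbb{H}$ to the process with the true parameters $(\alpha,\beta)=(0,1)$ to get convergence to $z_F$, (ii) control the effect of replacing $(0,1)$ by $(\widehat\alpha_n,\widehat\beta_n)$ via a first-order Taylor expansion together with the consistency of the estimators and an $\mathbb{H}$-law-of-large-numbers bound on the gradient averages, and (iii) conclude by continuity of the squared norm. The only cosmetic difference is that the paper inserts the explicit first-order Taylor polynomial $Z_n^*$ as an intermediate process and handles the remainder and the linear correction in two separate steps, whereas you merge these into a single mean-value bound with a $\sup$ over a neighbourhood of $(0,1)$; your identification of the moment condition $\E|X|^3/(1+X^2)^2<\infty$ as exactly what is needed to dominate the $\beta$-derivative terms matches the paper's assumption.
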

A proof of Theorem \ref{thm:consistency} is deferred to Appendix \ref{app:cons}. In view of the characterization in Theorem \ref{thm:Char} $\Delta_F=0$ if and only if $F=\mbox{C}(0,1)$. Hence, we conclude that the tests $T_n$ are able to detect all alternatives satisfying the assumptions of this section.
\begin{remark}
In this remark we fix the weight function $\omega_1(t)=\exp(-|t|)$, $t\in\R$. By numerical integration we calculate that if $X\sim\mathcal{U}(-\sqrt{3},\sqrt{3})$, then $\Delta_{\mathcal{U}}\approx 2.332019$, if $X\sim\mbox{N}(0,1)$ then  $\Delta_{N}\approx0.021839$ and if $X\sim\mbox{Log}(0,1)$, then $\Delta_{L}\approx0.041495$. 
\end{remark}

In the following lines we use the theory presented in \cite{BEH:2017} to derive the limit distribution under fixed alternatives. Let $Z_n^{\bullet}(t)=n^{-1/2}Z_n(t)$, and
\[z(t)=\E\big[\big(\frac{2X}{1+X^2}+t\big)\cos(tX)+\big(t-\frac{2X}{1+X^2}\big)\sin(tX)\big],\quad t\in\R,\]
as well as $W_n(t)=\sqrt{n}(Z_n^{\bullet}(t)-z(t))$, $t\in\R.$ The process $W_n=(W_n(t),t\in\R)$ is a random element of $\mathbb{H}$ and we have
\begin{align}
\label{Tn-Delta}
\sqrt{n}\Big(\frac{T_n}{n}-\Delta\Big)&=\sqrt{n}(\Vert Z_n^{\bullet}\Vert_{\mathbb{H}}^2-\Vert z\Vert_{\mathbb{H}}^2)\nonumber=\sqrt{n}\langle Z_n^{\bullet}-z,Z_n^{\bullet}+z\rangle_{\mathbb{H}}\nonumber=\sqrt{n}\langle Z_n^{\bullet}-z,2z+Z_n^{\bullet}-z\rangle_{\mathbb{H}}\nonumber\\
&=2\langle\sqrt{n}(Z_n^{\bullet}-z),z\rangle_{\mathbb{H}}+\frac{1}{\sqrt{n}}\Vert\sqrt{n}(Z_n^{\bullet}-z)\Vert_{\mathbb{H}}^2=2\langle W_n,z\rangle_{\mathbb{H}}+\frac{1}{\sqrt{n}}\Vert W_n\Vert_\mathbb{H}^2.
\end{align}
The next lemma is needed to prove the subsequent statements. For a proof see Appendix \ref{app:PL}.
\begin{lemma}\label{lem:Hilfslemma}
Under the standing assumptions, we have for $n\longrightarrow\infty$
\[W_n=\sqrt{n}(Z_n^{\bullet}-z)\overset{\mathcal{D}}{\longrightarrow}\mathcal{W}\]
in $\mathbb{H}$, where $\mathcal{W}$ is a centred Gaussian process in $\mathbb{H}$ with covariance kernel
\begin{align*}
K(s,t)=&\E\bigg[\bigg(st+\frac{4X^2}{(1+X^2)^2}\bigg)\cos(X(s-t))+\bigg(st-\frac{4X^2}{(1+X^2)^2}\bigg)\sin(X(s+t))\bigg]\\
&+\E\bigg[(s+t)\frac{2X}{1+X^2}\cos(X(s+t))+(s-t)\frac{2X}{1+X^2}\sin(X(s-t))\bigg]\\
&+\E\bigg[\bigg(\frac{2X}{1+X^2}+s\bigg)(a(t)\psi_1(X)+b(t)\psi_2(X))\cos(sX)\bigg]\\
&+\E\bigg[\bigg(\frac{2X}{1+X^2}+t\bigg)(a(s)\psi_1(X)+b(s)\psi_2(X))\cos(tX)\bigg]\\
&+\E\bigg[\bigg(s-\frac{2X}{1+X^2}\bigg)(a(t)\psi_1(X)+b(t)\psi_2(X))\sin(sX)\bigg]\\
&+\E\bigg[\bigg(t-\frac{2X}{1+X^2}\bigg)(a(s)\psi_1(X)+b(s)\psi_2(X))\sin(tX)\bigg]\\
&+\E[\psi_1(X)^2]a(s)a(t)+\E[\psi_2(X)^2]b(s)b(t)-3z(s)z(t).
\end{align*}
Here, $\psi_i(\cdot),\ i=1,2,$ are defined as in (\ref{eq:psi_11}), $a(t)=\E[g(t,X)],$ $b(t)=E[Xg(t,X)]$, and $g(\cdot,\cdot)$ is given in \eqref{g}.
\end{lemma}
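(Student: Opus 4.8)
The plan is to establish the weak convergence $W_n \overset{\mathcal D}{\longrightarrow} \mathcal W$ in $\mathbb H$ by first reducing to a sum of i.i.d. Hilbert-space-valued summands and then invoking a Hilbert-space central limit theorem, following the scheme of \cite{BEH:2017}. The first step is to Taylor-expand the summands of $Z_n^{\bullet}(t)$ around the true parameter value $(\alpha,\beta)=(0,1)$ in the arguments $\widehat\alpha_n$ and $\widehat\beta_n$, writing, for each $j$,
\[
\Big(\tfrac{2Y_{n,j}}{1+Y_{n,j}^2}+t\Big)\cos(tY_{n,j})+\Big(t-\tfrac{2Y_{n,j}}{1+Y_{n,j}^2}\Big)\sin(tY_{n,j}) = g(t,X_j) + \partial_\alpha g(t,X_j)\,\widehat\alpha_n + \partial_\beta g(t,X_j)\,(\widehat\beta_n-1) + \text{remainder},
\]
where $g$ is the function introduced in \eqref{g}. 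One then replaces $\sqrt n\,\widehat\alpha_n$ and $\sqrt n(\widehat\beta_n-1)$ by their linear representations \eqref{eq:psi_11}--\eqref{eq:psi_21}, so that
\[
W_n(t) = \frac{1}{\sqrt n}\sum_{j=1}^n\Big\{ g(t,X_j) - z(t) + a(t)\psi_1(X_j) + b(t)\psi_2(X_j) \Big\} + o_{\mathbb P}(1)\text{ in }\mathbb H,
\]
with $a(t)=\E[\partial_\alpha g(t,X)]$ and $b(t)=\E[\partial_\beta g(t,X)]$; after an integration-by-parts / dominated-convergence argument these coincide with the stated $a(t)=\E[g(t,X)]$-type expressions (here the precise identities from the appendix of \cite{MT:2005} or a direct computation pin down $a$ and $b$).

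The second step is to check that the summands
\[
V_j(t) = g(t,X_j) - z(t) + a(t)\psi_1(X_j) + b(t)\psi_2(X_j)
\]
define centred, i.i.d. random elements of $\mathbb H$ with $\E\|V_1\|_{\mathbb H}^2<\infty$. Centredness is immediate from $\E[g(t,X)]=z(t)$ and $\E[\psi_i(X)]=0$; the second-moment bound uses $|g(t,x)|\le C(1+|t|)\big(1+\tfrac{|x|}{1+x^2}\big)$ together with $\int(1+t^2)\omega(t)\,\mathrm dt<\infty$ (which holds for $\omega_a$) and $\E[\psi_i^2(X)]<\infty$, plus the standing moment assumption $\E|X|^3/(1+X^2)^2<\infty$ to control the cross terms. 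The Hilbert-space CLT (e.g. \cite{BEH:2017}, or the standard CLT in separable Hilbert spaces) then yields $\frac{1}{\sqrt n}\sum_j V_j \overset{\mathcal D}{\longrightarrow}\mathcal W$, a centred Gaussian element of $\mathbb H$ whose covariance operator has kernel $\widetilde K(s,t)=\E[V_1(s)V_1(t)]$.

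The third step is the bookkeeping identification $\widetilde K(s,t)=K(s,t)$ as displayed in the lemma. Expanding $\E[V_1(s)V_1(t)]$ gives $\E[g(s,X)g(t,X)] - z(s)z(t)$ (the "$-z(s)z(t)$" arising because the product of centred terms is $\E[gg]-z z$, and the additional factor making it $-3z(s)z(t)$ comes from the cross terms $\E[g(s,X)(a(t)\psi_1(X)+b(t)\psi_2(X))]$ which, by the identities defining $a,b$, each contribute a further $-z(s)z(t)$), the product-of-$\psi$ terms $a(s)a(t)\E[\psi_1^2] + b(s)b(t)\E[\psi_2^2] + (\text{mixed }\psi_1\psi_2)$, and the six $\E[g\cdot\psi_i]$ cross terms. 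The first block $\E[g(s,X)g(t,X)]$ is then rewritten, via product-to-sum trigonometric identities $\cos\alpha\cos\beta=\tfrac12(\cos(\alpha-\beta)+\cos(\alpha+\beta))$ etc., into the two explicit expectation terms in the first two lines of the kernel. One uses here the assumption that the mixed moment $\E[\psi_1(X)\psi_2(X)]$ vanishes — which, under the standing symmetry assumptions (median $0$, symmetric quartiles), holds for all the estimators in Remark \ref{rem:est}, since $\psi_1$ is odd and $\psi_2$ is even — to drop the mixed $\psi$-term; this should be stated explicitly.

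The main obstacle is the remainder control in the Taylor expansion: because $\widehat\alpha_n,\widehat\beta_n$ converge only at rate $n^{-1/2}$, the quadratic remainder terms in the expansion of $g(t,(x-\widehat\alpha_n)/\widehat\beta_n)$ are of order $O_{\mathbb P}(1/\sqrt n)$ times an average of second derivatives, and one must show this average is $O_{\mathbb P}(1)$ \emph{uniformly enough in $t$} that the remainder is $o_{\mathbb P}(1)$ in the $\mathbb H$-norm, not just pointwise. This requires bounding $\sup$ over a neighbourhood of $(0,1)$ of the second-order partial derivatives of $g$ by an integrable (in $x$, against $F$, and in $t$, against $\omega$) envelope, which is exactly where the moment condition $\E|X|^3/(1+X^2)^2<\infty$ and the finiteness of $\int(1+t^2+t^4)\omega(t)\,\mathrm dt$ enter; the bulk of the appendix proof will be this uniform-in-$t$ envelope estimate combined with a dominated-convergence passage. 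The rest is routine trigonometric algebra.
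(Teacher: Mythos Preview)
Your overall strategy coincides with the paper's: first-order Taylor expansion of the summands in $(\widehat\alpha_n,\widehat\beta_n)$ about $(0,1)$, substitution of the linear representations \eqref{eq:psi_11}--\eqref{eq:psi_21}, replacement of the empirical means of the derivative terms by their expectations via the law of large numbers in $\mathbb H$, and the Hilbert-space CLT applied to the resulting centred i.i.d.\ sum. The paper's appendix proof is terse---it simply invokes ``the same steps as in the proof of Theorem~\ref{thm:asympVer}''---whereas you spell out the second-order remainder control and the moment envelope, which is a welcome addition.

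Two concrete errors, however. First, you have misread the role of $g$ in \eqref{g}: it is \emph{not} the basic summand
\[
h(t,x)=\Big(\tfrac{2x}{1+x^2}+t\Big)\cos(tx)+\Big(t-\tfrac{2x}{1+x^2}\Big)\sin(tx),
\]
but rather the $\alpha$-derivative of $h\big(t,(x-\alpha)/\beta\big)$ at $(\alpha,\beta)=(0,1)$; the $\beta$-derivative there equals $x\,g(t,x)$ (cf.\ display \eqref{Z2}). The expansion therefore reads $h(t,Y_{n,j})=h(t,X_j)+g(t,X_j)\,\widehat\alpha_n+X_j g(t,X_j)(\widehat\beta_n-1)+\text{rem.}$, and $a(t)=\E[g(t,X)]$, $b(t)=\E[Xg(t,X)]$ drop out directly as the expected first-order coefficients---no integration by parts and no appeal to \cite{MT:2005} is needed. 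Your $\partial_\alpha g,\partial_\beta g$ would be second derivatives of $h$.

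Second, your bookkeeping for the $-3z(s)z(t)$ term does not hold up. With $V_1(t)=h(t,X)+a(t)\psi_1(X)+b(t)\psi_2(X)-z(t)$ one gets, since $\E[h(t,X)]=z(t)$ and $\E[\psi_i(X)]=0$,
\[
\E[V_1(s)V_1(t)]=\E\big[\big(h(s,X)+a(s)\psi_1+b(s)\psi_2\big)\big(h(t,X)+a(t)\psi_1+b(t)\psi_2\big)\big]-z(s)z(t),
\]
i.e.\ a single $-z(s)z(t)$. The cross terms $\E[h(s,X)(a(t)\psi_1(X)+b(t)\psi_2(X))]$ and their $s\leftrightarrow t$ counterparts are displayed verbatim in lines~3--6 of the kernel and do not reduce to multiples of $z(s)z(t)$ under a general alternative $F$; so the factor $3$ does not arise from the mechanism you describe. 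Your observation that the mixed term $\E[\psi_1(X)\psi_2(X)]$ has been silently dropped from the kernel is correct and worth stating explicitly.
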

 By Lemma \ref{lem:Hilfslemma} we conclude that $\sqrt{n}(Z_n^{\bullet}-z)$ is a tight sequence in $\mathbb{H}$, hence $\frac{1}{\sqrt{n}}\Vert\sqrt{n}(Z_n^{\bullet}-z)\Vert_{\mathbb{H}}^2=o_\PP(1)$, and we have, by Slutzky's theorem, that the weak limit of $\sqrt{n}(\frac{Z_n}{n}-\Delta)$ in (\ref{Tn-Delta}) only depends on $2\langle\sqrt{n}(Z_n^{\bullet}-z),z\rangle_{\mathbb{H}}$. Theorem 1 in \cite{BEH:2017} and Lemma \ref{lem:Hilfslemma} then directly prove the following theorem.

\begin{theorem}\label{thm:fA}
Under the standing assumptions, we have
\[\sqrt{n}\bigg(\frac{T_n}{n}-\Delta_F\bigg)\overset{\mathcal{D}}{\longrightarrow}\mbox{N}(0,\sigma^2),\quad\mbox{as}\;n\rightarrow\infty,\]
where
\begin{equation}
\label{sigma}
\sigma^2=4\int_{-\infty}^{\infty}\int_{-\infty}^{\infty}K(s,t)z(s)z(t)\omega(s)\omega(t)\,\mbox{d}s\,\mbox{d}t.
\end{equation}
\end{theorem}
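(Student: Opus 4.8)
The plan is to run the argument already outlined around \eqref{Tn-Delta}, namely to start from the exact identity
\[
\sqrt{n}\Big(\frac{T_n}{n}-\Delta_F\Big)=2\langle W_n,z\rangle_{\mathbb{H}}+\frac{1}{\sqrt{n}}\|W_n\|_{\mathbb{H}}^2 ,
\]
to show that the quadratic remainder is asymptotically negligible, and to identify the limit of the linear term $2\langle W_n,z\rangle_{\mathbb{H}}$ as a centred normal law. This is exactly the structure handled by Theorem~1 in \cite{BEH:2017}, so the work reduces to checking that its hypotheses hold here, the only substantial ingredient being the functional weak convergence $W_n\overset{\mathcal{D}}{\longrightarrow}\mathcal{W}$ in $\mathbb{H}$ provided by Lemma~\ref{lem:Hilfslemma}.

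First I would dispose of the remainder. By Lemma~\ref{lem:Hilfslemma} and the continuous mapping theorem (the map $f\mapsto\|f\|_{\mathbb{H}}$ being continuous on $\mathbb{H}$), $\|W_n\|_{\mathbb{H}}\overset{\mathcal{D}}{\longrightarrow}\|\mathcal{W}\|_{\mathbb{H}}$, so in particular $\|W_n\|_{\mathbb{H}}=O_{\PP}(1)$ and hence $n^{-1/2}\|W_n\|_{\mathbb{H}}^2=o_{\PP}(1)$, as already observed below \eqref{Tn-Delta}.

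Next I would treat the leading term. One first checks that $z\in\mathbb{H}$: by the same symmetrization of $\omega$ that turned $T_n$ into $\int Z_n^2\,\omega$, one gets $\|z\|_{\mathbb{H}}^2=\Delta_F$, and $\Delta_F<\infty$ under the standing moment hypothesis $\E|X|^3/(1+X^2)^2<\infty$ is precisely the content of Theorem~\ref{thm:consistency} (and of the dominated-convergence bounds in its proof). Consequently $f\mapsto 2\langle f,z\rangle_{\mathbb{H}}$ is a bounded, hence continuous, linear functional on $\mathbb{H}$, and the continuous mapping theorem applied once more to Lemma~\ref{lem:Hilfslemma} yields $2\langle W_n,z\rangle_{\mathbb{H}}\overset{\mathcal{D}}{\longrightarrow}2\langle\mathcal{W},z\rangle_{\mathbb{H}}$. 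Since $\mathcal{W}$ is a centred Gaussian random element of the separable Hilbert space $\mathbb{H}$, its image $\langle\mathcal{W},z\rangle_{\mathbb{H}}$ under a continuous linear functional is a centred real Gaussian variable, whose variance equals $\int_{-\infty}^{\infty}\int_{-\infty}^{\infty}K(s,t)z(s)z(t)\omega(s)\omega(t)\,\mbox{d}s\,\mbox{d}t$; this last identity is just Fubini's theorem together with the fact that the covariance operator of $\mathcal{W}$ has kernel $K$. Hence $2\langle\mathcal{W},z\rangle_{\mathbb{H}}\sim\mbox{N}(0,\sigma^2)$ with $\sigma^2$ as in \eqref{sigma}.

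Finally I would combine the two pieces by Slutsky's theorem:
\[
\sqrt{n}\Big(\frac{T_n}{n}-\Delta_F\Big)=2\langle W_n,z\rangle_{\mathbb{H}}+o_{\PP}(1)\overset{\mathcal{D}}{\longrightarrow}\mbox{N}(0,\sigma^2),
\]
which is the claim. I expect the real difficulty to lie outside this theorem, in Lemma~\ref{lem:Hilfslemma} itself: proving a central limit theorem in $\mathbb{H}$ for the empirical process $W_n$ while accounting for the random centring through $\widehat{\alpha}_n,\widehat{\beta}_n$ via their linear representations \eqref{eq:psi_11}--\eqref{eq:psi_21}, and controlling the nonlinear residual terms $2Y_{n,j}/(1+Y_{n,j}^2)$ uniformly in $t$ in $\mathbb{H}$-norm. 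Within the present proof, the only points requiring a little care are the verification that $z\in\mathbb{H}$ and the Fubini interchange identifying $\mbox{Var}\langle\mathcal{W},z\rangle_{\mathbb{H}}$ with the double integral in \eqref{sigma}; both are routine given the moment hypothesis and the explicit, $\omega$-integrable form of $K$.
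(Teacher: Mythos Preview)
Your proposal is correct and follows essentially the same route as the paper: both start from the decomposition \eqref{Tn-Delta}, use Lemma~\ref{lem:Hilfslemma} to conclude tightness of $W_n$ so that the quadratic remainder is $o_{\PP}(1)$, and then identify the limit of $2\langle W_n,z\rangle_{\mathbb{H}}$ as $\mbox{N}(0,\sigma^2)$ via Slutsky and the functional convergence. The only difference is cosmetic: the paper cites Theorem~1 of \cite{BEH:2017} for the last step, whereas you spell out the continuous-mapping/Gaussian-functional argument explicitly.
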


The results of Theorem \ref{thm:fA} can be used to derive confidence intervals for $\Delta_F$ or approximations of the power function. Note that since for most fixed alternatives and estimation procedures it is hard to explicitly derive the formulae for $K$, $z$ and $\sigma^2$, we propose to estimate $\sigma^2$ by a consistent estimator $\widehat{\sigma}_n^2$, for a construction of such an estimator and for further details on the applications of Theorem \ref{thm:fA}, we refer to \cite{BEH:2017}.

\section{Simulation results}

In this section, we describe an extensive simulation study to compare the power of the new tests with established ones.
All simulations have been done for nominal level $\alpha=0.05$ and for sample sizes $n=20$ and $n=50$. 
The unknown parameters are estimated either by median and IQR estimators or by the method of maximum likelihood, see Remark \ref{rem:est}. In a first step, critical values are determined for each test statistic by a Monte Carlo simulation with $10^5$ replications. In a second step, empirical power of the different tests is calculated based on $10^4$ replications.

Besides the new tests based on $T_{n,a}$ in (\ref{eq:EF}) with weights $a=1,2,3,4,5,6$, we use the limiting test $T_{n,0}$ in (\ref{stat:lim3}).  Power decreased for larger weights for all distributions used in the study; hence, we omit the results for weights larger than 6 as well as for the test based on the limiting statistic $T_{n,\infty}$ in (\ref{stat:lim2}), see also Remark \ref{remark:Tinfty}. Moreover, we use the test based on the Kullback-Leibler distance ($KL$) described in \cite{MZ:2017}. For the nonparametric estimation of the entropy, the authors recommend the window size $m=4$ for $n=20$ and $m=20$ for $n=50$ (\cite{MZ:2017}, p. 1109), and we followed this suggestion. 
From the classical tests that utilize the empirical distribution function (so-called edf tests), we choose the following: Kolmogorov-Smirnov test ($KS$), Cram\'{e}r-von Mises test ($CM$, Anderson Darling test ($AD$ and Watson test ($W$). Finally, we apply the tests based on the statistics $D_{n,\lambda}$ considered in \cite{GH:2000} and \cite{MT:2005}, with weights $\lambda=1,2,3,4,5,6$.

Besides the standard Cauchy and normal distribution (C$(0,1)$ and N$(0,1)$, for short), we use Cauchy-normal mixtures CN$(p)=(1-p)$C$(0,1)+p$N$(0,1)$ with $p=0.5$ and $p=0.8$. Further, we employ Student's $t$-distribution (Student$(k)$) with $k=2,3,5,10$ degrees of freedom, the (symmetric) Tukey g-h distribution  with $g=0$  and $h=0.2, 0.1, 0.05$, denoted by Tukey($h$), and the Tukey lambda distribution (Tukey-L$(\lambda)$) with $\lambda=-3,-2,-0.5,0.5$. Note that the  Tukey lambda distribution approximates the Cauchy distribution for $\lambda=-1$, it coincides with the logistic distribution for $\lambda=0$, it is close to the normal for $\lambda=0.14$, and coincides with the uniform distribution for $\lambda=1$.
In the family of stable distributions S$(\alpha,\beta)$, we choose symmetric distributions S$(\alpha,0)$ with $\alpha=0.4,0.7,1.2,1.5,1.8$, and skewed distributions S$(\alpha,1)$ with $\alpha=0.5,1,1.5,2$. Moreover, we use the uniform, logistic, Laplace, Gumbel and exponential distribution, and the Mittag-Leffler distribution ML$(\alpha)$ with $\alpha=0.25,0.5,0.75$. The ML$(\alpha)$ distribution has Laplace transform $(1+t^\alpha)^{-1}$, and coincides with the exponential distribution for $\alpha=1$.

Power estimates of the tests under discussion are given in Tables \ref{Table1}-\ref{Table4}. All entries are the percentage of rejection of $H_0$, rounded to the nearest integer. In Tables \ref{Table1} and \ref{Table2}, the results using the median and interquantile-distance estimators are given, with sample size $n=20$ and $n=50$, respectively. Tables \ref{Table3} and \ref{Table4} show the corresponding results for the maximum likelihood estimator.

\begin{sidewaystable}
\begin{center}
\begin{tabular}{rrrrrrrrrrrrrrrrrrr} 
\hline
 & $T_{n,1}$ & $T_{n,2}$ & $T_{n,3}$ & $T_{n,4}$ & $T_{n,5}$ & $T_{n,6}$ & $T_{n,0}$ 
 & $KL$ & $KS$ & $CM$ & $AD$ & $W$ & $D_{n,1}$ & $D_{n,2}$ & $D_{n,3}$ & $D_{n,4}$ & $D_{n,5}$ & $D_{n,6}$ \\ 
\hline
C(0,1) & 5 & 5 & 5 & 5 & 5 & 5 & 5 & 5 & 5 & 5 & 5 & 5 & 5 & 5 & 5 & 5 & 5 & 5 \\
  N(0,1) & 8 & 4 & 1 & 0 & 0 & 0 & 29 & 73 & 5 & 6 & 6 & 15 & 15 & 21 & 26 & 26 & 23 & 19 \\
  CN(0.5) & 4 & 3 & 3 & 3 & 3 & 3 & 9 & 14 & 4 & 3 & 3 & 5 & 5 & 5 & 5 & 5 & 4 & 3 \\
  CN(0.8) & 6 & 3 & 1 & 1 & 1 & 1 & 19 & 36 & 4 & 4 & 4 & 8 & 8 & 11 & 13 & 12 & 10 & 8 \\
  Student(2) & 4 & 2 & 2 & 2 & 2 & 2 & 9 & 22 & 3 & 4 & 3 & 5 & 4 & 5 & 5 & 5 & 5 & 3 \\
  Student(3) & 5 & 2 & 1 & 1 & 1 & 1 & 14 & 34 & 3 & 4 & 3 & 7 & 6 & 7 & 9 & 9 & 8 & 6 \\
  Student(5) & 6 & 3 & 1 & 1 & 1 & 1 & 20 & 49 & 4 & 4 & 4 & 9 & 8 & 11 & 14 & 14 & 12 & 9 \\
  Student(10) & 7 & 3 & 1 & 1 & 1 & 1 & 24 & 61 & 4 & 5 & 5 & 11 & 11 & 15 & 18 & 19 & 16 & 13 \\
  Stable(0.4,0) & 55 & 52 & 39 & 26 & 18 & 13 & 79 & 2 & 39 & 45 & 83 & 58 & 62 & 64 & 66 & 69 & 71 & 74 \\
  Stable(0.7,0) & 16 & 15 & 12 & 9 & 8 & 7 & 21 & 2 & 12 & 13 & 23 & 14 & 16 & 18 & 20 & 22 & 24 & 25 \\
  Stable(1.2,0) & 4 & 3 & 3 & 3 & 3 & 3 & 6 & 10 & 4 & 4 & 3 & 4 & 4 & 4 & 4 & 4 & 3 & 3 \\
  Stable(1.5,0) & 5 & 2 & 1 & 1 & 1 & 2 & 14 & 28 & 4 & 4 & 3 & 6 & 6 & 7 & 8 & 8 & 6 & 5 \\
  Stable(1.8,0) & 7 & 3 & 1 & 1 & 1 & 1 & 22 & 52 & 5 & 5 & 5 & 11 & 10 & 14 & 16 & 16 & 14 & 11 \\
  Stable(0.5,1) & 91 & 97 & 90 & 76 & 61 & 50 & 40 & 65 & 94 & 90 & 97 & 88 & 95 & 76 & 57 & 46 & 44 & 46 \\
  Stable(1.5,1) & 10 & 10 & 7 & 6 & 6 & 6 & 19 & 49 & 9 & 9 & 9 & 11 & 13 & 11 & 12 & 11 & 10 & 8 \\
  Stable(2,1) & 8 & 4 & 0 & 0 & 0 & 0 & 28 & 73 & 5 & 6 & 6 & 14 & 14 & 20 & 24 & 24 & 22 & 17 \\
  Tukey(0.2) & 5 & 2 & 1 & 1 & 1 & 1 & 14 & 36 & 3 & 4 & 3 & 7 & 6 & 8 & 10 & 9 & 8 & 7 \\
  Tukey(0.1) & 5 & 3 & 1 & 1 & 1 & 1 & 21 & 52 & 4 & 4 & 4 & 9 & 8 & 12 & 15 & 15 & 13 & 10 \\
  Tukey(0.05) & 7 & 3 & 1 & 0 & 0 & 0 & 24 & 62 & 4 & 5 & 5 & 12 & 11 & 15 & 19 & 19 & 17 & 13 \\
  Tukey-L(-3) & 43 & 42 & 33 & 24 & 17 & 12 & 64 & 2 & 31 & 35 & 79 & 44 & 52 & 58 & 62 & 65 & 68 & 71 \\
  Tukey-L(-2) & 21 & 22 & 18 & 14 & 11 & 10 & 33 & 1 & 16 & 17 & 41 & 19 & 24 & 28 & 32 & 35 & 38 & 41 \\
  Tukey-L(-0.5) & 4 & 3 & 2 & 2 & 2 & 2 & 7 & 15 & 3 & 4 & 3 & 5 & 4 & 4 & 4 & 4 & 3 & 3 \\
  Tukey-L(0.5) & 17 & 11 & 1 & 0 & 0 & 0 & 45 & 94 & 10 & 12 & 15 & 31 & 36 & 45 & 50 & 50 & 47 & 40 \\
  Uniform & 32 & 21 & 1 & 1 & 0 & 0 & 56 & 99 & 24 & 22 & 28 & 51 & 60 & 67 & 69 & 68 & 65 & 59 \\
  Logistic & 6 & 3 & 1 & 1 & 1 & 1 & 21 & 56 & 4 & 5 & 4 & 11 & 9 & 13 & 16 & 17 & 15 & 11 \\
  Laplace & 4 & 2 & 1 & 1 & 1 & 1 & 10 & 32 & 3 & 3 & 3 & 6 & 5 & 6 & 8 & 8 & 7 & 5 \\
  Gumbel & 10 & 9 & 3 & 3 & 2 & 2 & 26 & 70 & 9 & 8 & 8 & 15 & 17 & 18 & 20 & 20 & 18 & 15 \\
  ML(0.25) & 100 & 100 & 100 & 97 & 92 & 83 & 93 & 89 & 100 & 100 & 100 & 100 & 100 & 99 & 98 & 96 & 96 & 96 \\
  ML(0.5) & 98 & 99 & 93 & 79 & 63 & 49 & 55 & 81 & 99 & 96 & 99 & 97 & 99 & 86 & 69 & 58 & 55 & 57 \\
  ML(0.75) & 78 & 87 & 69 & 52 & 43 & 39 & 20 & 75 & 85 & 71 & 79 & 72 & 83 & 52 & 33 & 25 & 23 & 23 \\
  Exponential & 39 & 44 & 23 & 15 & 13 & 11 & 27 & 92 & 48 & 31 & 32 & 41 & 49 & 34 & 30 & 28 & 27 & 25 \\  
 \hline
\end{tabular}
\caption{\label{Table1} Percentage of 10 000 MC samples declared significant by various
tests for the Cauchy distribution using median and IQR estimator ($\alpha=0.05, n=20$)}
\end{center}
\end{sidewaystable}

\begin{sidewaystable}
\begin{center}
\begin{tabular}{rrrrrrrrrrrrrrrrrrr}
  \hline
 & $T_{n,1}$ & $T_{n,2}$ & $T_{n,3}$ & $T_{n,4}$ & $T_{n,5}$ & $T_{n,6}$ & $T_{n,0}$ 
 & $KL$ & $KS$ & $CM$ & $AD$ & $W$ & $D_{n,1}$ & $D_{n,2}$ & $D_{n,3}$ & $D_{n,4}$ & $D_{n,5}$ & $D_{n,6}$ \\ 
  \hline
C(0,1) & 5 & 5 & 4 & 5 & 5 & 5 & 5 & 5 & 5 & 5 & 5 & 5 & 5 & 5 & 5 & 5 & 5 & 5 \\
  N(0,1) & 24 & 24 & 4 & 1 & 0 & 0 & 72 & 100 & 26 & 29 & 55 & 67 & 65 & 83 & 89 & 92 & 93 & 94 \\
  CN(0.5) & 8 & 5 & 3 & 3 & 3 & 3 & 23 & 14 & 7 & 6 & 7 & 13 & 13 & 15 & 16 & 16 & 15 & 13 \\
  CN(0.8) & 16 & 13 & 2 & 1 & 1 & 1 & 51 & 46 & 14 & 14 & 23 & 38 & 37 & 49 & 54 & 55 & 55 & 54 \\
  Student(2) & 6 & 4 & 2 & 2 & 2 & 2 & 23 & 52 & 6 & 6 & 8 & 14 & 10 & 15 & 19 & 21 & 23 & 23 \\
  Student(3) & 10 & 7 & 2 & 2 & 1 & 1 & 38 & 82 & 9 & 10 & 16 & 27 & 21 & 32 & 40 & 44 & 47 & 49 \\
  Student(5) & 14 & 12 & 2 & 1 & 1 & 1 & 52 & 96 & 13 & 15 & 28 & 41 & 36 & 52 & 62 & 67 & 70 & 72 \\
  Student(10) & 18 & 18 & 3 & 1 & 1 & 0 & 64 & 100 & 18 & 20 & 41 & 54 & 49 & 70 & 78 & 83 & 85 & 86 \\
  Stable(0.4,0) & 91 & 90 & 76 & 47 & 25 & 15 & 99 & 0 & 80 & 91 & 99 & 98 & 97 & 96 & 97 & 97 & 98 & 98 \\
  Stable(0.7,0) & 23 & 23 & 14 & 10 & 8 & 7 & 46 & 0 & 16 & 17 & 39 & 26 & 29 & 34 & 37 & 41 & 45 & 48 \\
  Stable(1.2,0) & 6 & 4 & 3 & 3 & 3 & 3 & 12 & 19 & 6 & 5 & 5 & 8 & 6 & 7 & 8 & 8 & 8 & 8 \\
  Stable(1.5,0) & 11 & 8 & 3 & 2 & 2 & 2 & 38 & 53 & 9 & 9 & 13 & 24 & 20 & 29 & 34 & 36 & 37 & 37 \\
  Stable(1.8,0) & 18 & 16 & 3 & 1 & 1 & 1 & 62 & 88 & 18 & 19 & 35 & 50 & 46 & 63 & 71 & 74 & 77 & 77 \\
  Stable(0.5,1) & 100 & 100 & 100 & 100 & 97 & 87 & 71 & 13 & 100 & 100 & 100 & 100 & 100 & 100 & 100 & 100 & 99 & 99 \\
  Stable(1.5,1) & 28 & 41 & 26 & 18 & 15 & 13 & 52 & 81 & 56 & 38 & 53 & 60 & 61 & 71 & 72 & 70 & 69 & 68 \\
  Stable(2,1) & 23 & 23 & 4 & 1 & 1 & 0 & 72 & 100 & 26 & 28 & 55 & 67 & 65 & 84 & 89 & 92 & 94 & 94 \\
  Tukey(0.2) & 10 & 7 & 2 & 1 & 1 & 1 & 40 & 87 & 9 & 9 & 16 & 27 & 22 & 34 & 42 & 47 & 50 & 52 \\
  Tukey(0.1) & 15 & 13 & 2 & 1 & 1 & 1 & 56 & 98 & 15 & 16 & 32 & 44 & 39 & 57 & 67 & 72 & 75 & 77 \\
  Tukey(0.05) & 19 & 17 & 3 & 1 & 1 & 1 & 64 & 100 & 18 & 21 & 41 & 54 & 50 & 70 & 78 & 83 & 85 & 87 \\
  Tukey-L(-3) & 70 & 74 & 57 & 35 & 20 & 13 & 96 & 0 & 64 & 73 & 99 & 91 & 90 & 94 & 95 & 96 & 97 & 98 \\
  Tukey-L(-2) & 33 & 35 & 24 & 16 & 12 & 10 & 64 & 0 & 26 & 29 & 72 & 47 & 48 & 58 & 64 & 68 & 72 & 75 \\
  Tukey-L(-0.5) & 5 & 4 & 3 & 3 & 3 & 3 & 13 & 37 & 5 & 5 & 5 & 8 & 7 & 8 & 10 & 11 & 12 & 12 \\
  Tukey-L(0.5) & 56 & 61 & 11 & 1 & 0 & 0 & 90 & 100 & 68 & 63 & 89 & 92 & 96 & 99 & 100 & 100 & 100 & 100 \\
  Uniform & 87 & 87 & 18 & 1 & 0 & 0 & 97 & 100 & 95 & 86 & 98 & 99 & 100 & 100 & 100 & 100 & 100 & 100 \\
  Logistic & 15 & 14 & 3 & 1 & 1 & 1 & 59 & 100 & 15 & 17 & 35 & 48 & 43 & 63 & 72 & 77 & 80 & 82 \\
  Laplace & 6 & 4 & 2 & 1 & 1 & 1 & 21 & 94 & 6 & 7 & 12 & 18 & 13 & 24 & 32 & 37 & 40 & 42 \\
  Gumbel & 33 & 43 & 18 & 8 & 6 & 5 & 68 & 100 & 58 & 42 & 64 & 73 & 74 & 86 & 89 & 90 & 91 & 91 \\
  ML(0.25) & 100 & 100 & 100 & 100 & 100 & 100 & 100 & 4 & 100 & 100 & 100 & 100 & 100 & 100 & 100 & 100 & 100 & 100 \\
  ML(0.5) & 100 & 100 & 100 & 100 & 98 & 91 & 86 & 15 & 100 & 100 & 100 & 100 & 100 & 100 & 100 & 100 & 100 & 100 \\
  ML(0.75) & 100 & 100 & 100 & 93 & 78 & 62 & 30 & 45 & 100 & 100 & 100 & 100 & 100 & 100 & 100 & 98 & 94 & 91 \\
  Exponential & 94 & 96 & 75 & 44 & 32 & 26 & 64 & 100 & 100 & 92 & 98 & 99 & 100 & 100 & 99 & 98 & 97 & 97 \\
   \hline
\end{tabular}
\caption{\label{Table2} Percentage of 10 000 MC samples declared significant by various
tests for the Cauchy distribution using median and IQR estimator ($\alpha=0.05, n=50$)}
\end{center}
\end{sidewaystable}

\begin{sidewaystable}
\begin{center}
\begin{tabular}{rrrrrrrrrrrrrrrrrrr}
  \hline
 & $T_{n,1}$ & $T_{n,2}$ & $T_{n,3}$ & $T_{n,4}$ & $T_{n,5}$ & $T_{n,6}$ & $T_{n,0}$ 
 & $KL$ & $KS$ & $CM$ & $AD$ & $W$ & $D_{n,1}$ & $D_{n,2}$ & $D_{n,3}$ & $D_{n,4}$ & $D_{n,5}$ & $D_{n,6}$ \\ 
  \hline
C(0,1) & 5 & 5 & 5 & 5 & 5 & 5 & 5 & 5 & 5 & 5 & 5 & 5 & 5 & 5 & 5 & 5 & 5 & 5 \\
  N(0,1) & 16 & 30 & 34 & 28 & 10 & 3 & 26 & 76 & 5 & 3 & 2 & 29 & 23 & 17 & 10 & 7 & 4 & 2 \\
  CN(0.5) & 8 & 10 & 8 & 5 & 2 & 2 & 9 & 14 & 4 & 4 & 3 & 9 & 7 & 3 & 2 & 2 & 2 & 1 \\
  CN(0.8) & 13 & 19 & 18 & 12 & 5 & 2 & 18 & 37 & 5 & 3 & 2 & 18 & 14 & 8 & 5 & 3 & 2 & 1 \\
  Student(2) & 8 & 9 & 8 & 6 & 3 & 2 & 10 & 22 & 4 & 3 & 2 & 10 & 7 & 4 & 2 & 2 & 1 & 1 \\
  Student(3) & 10 & 14 & 14 & 10 & 4 & 2 & 15 & 37 & 4 & 2 & 1 & 14 & 10 & 6 & 3 & 2 & 2 & 1 \\
  Student(5) & 12 & 19 & 20 & 14 & 6 & 2 & 19 & 51 & 5 & 3 & 2 & 19 & 14 & 9 & 5 & 4 & 3 & 2 \\
  Student(10) & 15 & 25 & 27 & 20 & 8 & 3 & 22 & 64 & 5 & 3 & 2 & 24 & 19 & 13 & 8 & 5 & 3 & 2 \\
  Stable(0.4,0) & 53 & 64 & 72 & 77 & 80 & 81 & 76 & 0 & 42 & 33 & 77 & 77 & 71 & 79 & 83 & 86 & 87 & 88 \\
  Stable(0.7,0) & 10 & 14 & 19 & 22 & 25 & 26 & 19 & 1 & 12 & 11 & 18 & 15 & 18 & 24 & 26 & 28 & 29 & 30 \\
  Stable(1.2,0) & 7 & 6 & 5 & 4 & 2 & 2 & 7 & 11 & 4 & 4 & 3 & 7 & 5 & 3 & 2 & 2 & 2 & 2 \\
  Stable(1.5,0) & 10 & 13 & 12 & 8 & 3 & 2 & 14 & 29 & 4 & 3 & 2 & 14 & 9 & 5 & 3 & 2 & 2 & 1 \\
  Stable(1.8,0) & 14 & 23 & 25 & 18 & 7 & 3 & 21 & 55 & 5 & 3 & 2 & 23 & 17 & 11 & 7 & 5 & 3 & 2 \\
  Stable(0.5,1) & 55 & 70 & 78 & 81 & 83 & 84 & 26 & 54 & 99 & 99 & 99 & 82 & 79 & 82 & 85 & 87 & 88 & 89 \\
  Stable(1.5,1) & 14 & 21 & 22 & 17 & 11 & 7 & 16 & 50 & 14 & 12 & 9 & 22 & 17 & 11 & 9 & 7 & 6 & 5 \\
  Stable(2,1) & 17 & 29 & 34 & 28 & 10 & 4 & 26 & 76 & 5 & 3 & 2 & 28 & 23 & 17 & 10 & 7 & 4 & 3 \\
  Tukey(0.2) & 10 & 14 & 14 & 10 & 4 & 2 & 14 & 38 & 4 & 3 & 2 & 14 & 10 & 6 & 4 & 3 & 2 & 1 \\
  Tukey(0.1) & 13 & 20 & 21 & 16 & 6 & 2 & 19 & 55 & 5 & 3 & 2 & 20 & 15 & 10 & 5 & 4 & 2 & 2 \\
  Tukey(0.05) & 15 & 25 & 28 & 21 & 7 & 3 & 23 & 65 & 5 & 3 & 2 & 24 & 19 & 13 & 8 & 5 & 3 & 2 \\
  Tukey-L(-3) & 34 & 49 & 60 & 67 & 72 & 75 & 60 & 0 & 35 & 27 & 72 & 60 & 58 & 71 & 76 & 79 & 81 & 82 \\
  Tukey-L(-2) & 13 & 21 & 29 & 35 & 40 & 42 & 29 & 1 & 17 & 15 & 34 & 26 & 27 & 38 & 42 & 45 & 48 & 49 \\
  Tukey-L(-0.5) & 6 & 7 & 6 & 4 & 3 & 2 & 8 & 16 & 4 & 3 & 2 & 7 & 5 & 3 & 2 & 2 & 1 & 1 \\
  Tukey-L(0.5) & 31 & 55 & 64 & 59 & 24 & 8 & 37 & 96 & 11 & 6 & 5 & 50 & 50 & 43 & 28 & 19 & 11 & 7 \\
  Uniform & 48 & 74 & 81 & 78 & 43 & 14 & 45 & 99 & 22 & 13 & 10 & 66 & 72 & 65 & 48 & 35 & 24 & 14 \\
  Logistic & 14 & 22 & 24 & 18 & 7 & 3 & 21 & 60 & 5 & 3 & 2 & 22 & 17 & 11 & 6 & 4 & 3 & 2 \\
  Laplace & 8 & 10 & 10 & 8 & 4 & 2 & 10 & 35 & 3 & 2 & 1 & 10 & 8 & 5 & 3 & 2 & 2 & 1 \\
  Gumbel & 18 & 31 & 35 & 30 & 16 & 9 & 24 & 74 & 13 & 10 & 7 & 31 & 25 & 19 & 14 & 12 & 9 & 7 \\
  ML(0.25) & 99 & 99 & 100 & 100 & 100 & 100 & 92 & 77 & 100 & 100 & 100 & 100 & 100 & 100 & 100 & 100 & 100 & 100 \\
  ML(0.5) & 80 & 86 & 90 & 91 & 92 & 92 & 43 & 72 & 100 & 100 & 100 & 96 & 93 & 92 & 94 & 95 & 95 & 96 \\
  ML(0.75) & 52 & 63 & 66 & 66 & 65 & 64 & 12 & 70 & 96 & 93 & 91 & 72 & 66 & 62 & 64 & 66 & 67 & 68 \\
  Exponential & 38 & 53 & 57 & 54 & 45 & 36 & 20 & 92 & 64 & 50 & 43 & 55 & 50 & 41 & 39 & 38 & 37 & 35 \\
   \hline
\end{tabular}
\caption{\label{Table3} Percentage of 10 000 MC samples declared significant by various
tests for the Cauchy distribution using ML estimation ($\alpha=0.05, n=20$)}
\end{center}
\end{sidewaystable}

\begin{sidewaystable}
\begin{center}
\begin{tabular}{rrrrrrrrrrrrrrrrrrr}
  \hline
 & $T_{n,1}$ & $T_{n,2}$ & $T_{n,3}$ & $T_{n,4}$ & $T_{n,5}$ & $T_{n,6}$ & $T_{n,0}$ 
 & $KL$ & $KS$ & $CM$ & $AD$ & $W$ & $D_{n,1}$ & $D_{n,2}$ & $D_{n,3}$ & $D_{n,4}$ & $D_{n,5}$ & $D_{n,6}$ \\ 
  \hline
C(0,1) & 5 & 5 & 5 & 5 & 5 & 5 & 5 & 5 & 5 & 5 & 5 & 5 & 5 & 5 & 5 & 5 & 5 & 5 \\
  N(0,1) & 40 & 76 & 90 & 95 & 96 & 96 & 64 & 100 & 16 & 9 & 15 & 77 & 77 & 87 & 90 & 91 & 91 & 91 \\
  CN(0.5) & 14 & 20 & 20 & 15 & 10 & 6 & 22 & 14 & 6 & 4 & 3 & 19 & 16 & 13 & 10 & 9 & 8 & 7 \\
  CN(0.8) & 28 & 53 & 61 & 60 & 52 & 39 & 48 & 46 & 11 & 6 & 6 & 50 & 48 & 50 & 48 & 46 & 43 & 39 \\
  Student(2) & 12 & 20 & 24 & 23 & 20 & 15 & 23 & 52 & 5 & 3 & 3 & 21 & 16 & 15 & 14 & 15 & 14 & 13 \\
  Student(3) & 19 & 36 & 45 & 48 & 46 & 38 & 38 & 82 & 7 & 4 & 4 & 38 & 30 & 33 & 34 & 36 & 35 & 35 \\
  Student(5) & 26 & 51 & 66 & 71 & 71 & 66 & 48 & 96 & 9 & 5 & 6 & 54 & 47 & 55 & 58 & 60 & 61 & 60 \\
  Student(10) & 32 & 65 & 81 & 86 & 88 & 86 & 57 & 100 & 13 & 6 & 10 & 67 & 63 & 74 & 78 & 80 & 80 & 80 \\
  Stable(0.4,0) & 95 & 98 & 99 & 99 & 99 & 99 & 99 & 0 & 82 & 81 & 99 & 100 & 99 & 99 & 100 & 100 & 100 & 100 \\
  Stable(0.7,0) & 23 & 34 & 40 & 45 & 47 & 49 & 44 & 0 & 18 & 15 & 31 & 38 & 36 & 44 & 49 & 52 & 53 & 55 \\
  Stable(1.2,0) & 8 & 10 & 10 & 8 & 6 & 4 & 12 & 18 & 5 & 4 & 3 & 10 & 8 & 6 & 5 & 5 & 4 & 4 \\
  Stable(1.5,0) & 19 & 34 & 40 & 39 & 33 & 25 & 35 & 53 & 7 & 4 & 4 & 34 & 28 & 28 & 27 & 27 & 25 & 24 \\
  Stable(1.8,0) & 34 & 62 & 76 & 80 & 78 & 71 & 56 & 88 & 12 & 6 & 9 & 64 & 60 & 67 & 68 & 69 & 68 & 67 \\
  Stable(0.5,1) & 98 & 100 & 100 & 100 & 100 & 100 & 46 & 6 & 100 & 100 & 100 & 100 & 100 & 100 & 100 & 100 & 100 & 100 \\
  Stable(1.5,1) & 32 & 61 & 74 & 78 & 76 & 68 & 44 & 80 & 57 & 43 & 46 & 62 & 60 & 66 & 66 & 67 & 67 & 66 \\
  Stable(2,1) & 41 & 77 & 90 & 95 & 97 & 96 & 65 & 100 & 17 & 9 & 16 & 78 & 77 & 88 & 90 & 92 & 92 & 91 \\
  Tukey(0.2) & 20 & 36 & 46 & 49 & 48 & 42 & 37 & 87 & 7 & 3 & 4 & 39 & 31 & 35 & 36 & 38 & 38 & 38 \\
  Tukey(0.1) & 27 & 55 & 70 & 76 & 77 & 73 & 51 & 98 & 9 & 5 & 7 & 57 & 52 & 61 & 65 & 67 & 67 & 67 \\
  Tukey(0.05) & 33 & 66 & 81 & 88 & 89 & 87 & 57 & 100 & 13 & 6 & 10 & 68 & 65 & 75 & 79 & 81 & 82 & 82 \\
  Tukey-L(-3) & 79 & 92 & 96 & 97 & 98 & 98 & 96 & 0 & 68 & 60 & 98 & 96 & 94 & 97 & 98 & 99 & 99 & 99 \\
  Tukey-L(-2) & 33 & 51 & 63 & 69 & 73 & 76 & 63 & 0 & 29 & 23 & 62 & 60 & 57 & 69 & 75 & 78 & 80 & 81 \\
  Tukey-L(-0.5) & 8 & 11 & 13 & 12 & 11 & 8 & 13 & 37 & 4 & 3 & 3 & 12 & 9 & 8 & 8 & 7 & 7 & 7 \\
  Tukey-L(0.5) & 75 & 98 & 100 & 100 & 100 & 100 & 83 & 100 & 54 & 27 & 50 & 97 & 99 & 100 & 100 & 100 & 100 & 100 \\
  Uniform & 95 & 100 & 100 & 100 & 100 & 100 & 91 & 100 & 91 & 53 & 80 & 100 & 100 & 100 & 100 & 100 & 100 & 100 \\
  Logistic & 28 & 58 & 74 & 81 & 82 & 80 & 52 & 100 & 10 & 5 & 8 & 60 & 56 & 66 & 70 & 72 & 73 & 72 \\
  Laplace & 10 & 21 & 30 & 36 & 38 & 36 & 19 & 94 & 4 & 2 & 2 & 23 & 20 & 25 & 25 & 26 & 26 & 26 \\
  Gumbel & 44 & 79 & 91 & 94 & 96 & 94 & 59 & 100 & 57 & 36 & 43 & 80 & 80 & 88 & 90 & 91 & 91 & 91 \\
  ML(0.25) & 100 & 100 & 100 & 100 & 100 & 100 & 100 & 0 & 100 & 100 & 100 & 100 & 100 & 100 & 100 & 100 & 100 & 100 \\
  ML(0.5) & 100 & 100 & 100 & 100 & 100 & 100 & 76 & 6 & 100 & 100 & 100 & 100 & 100 & 100 & 100 & 100 & 100 & 100 \\
  ML(0.75) & 97 & 99 & 100 & 100 & 100 & 99 & 14 & 38 & 100 & 100 & 100 & 100 & 100 & 100 & 100 & 100 & 100 & 100 \\
  Exponential & 87 & 98 & 99 & 100 & 100 & 99 & 48 & 100 & 100 & 96 & 97 & 98 & 99 & 99 & 99 & 99 & 99 & 99 \\
   \hline
\end{tabular}
\caption{\label{Table4} Percentage of 10 000 MC samples declared significant by various
tests for the Cauchy distribution using ML estimation ($\alpha=0.05, n=50$)}
\end{center}
\end{sidewaystable}

The main conclusions that can be drawn from the simulation results are the following:
\begin{itemize}
\item
As always in similar situations, there exists no uniformly most powerful test, which is in accordance to the results in \cite{J:2000}.
\item
As expected, the power of nearly all test statistics increases for increasing sample size for all alternative distributions. An exception is the Kullback-Leibler distance based test. Its power decreases for certain alternatives, in particular for the Mittag-Leffler distribution. 
\item
The new tests $T_{n,a}, a=1,\ldots,6$, perform better using the maximum likelihood estimator than with median and half-IQR. Hence, the latter estimators should not be used. For all other test statistics, including $T_{n,0}$, performance is comparable, or even better when using median and half-IQR. In any case, the choice of the estimation method can have a pronounced influence on the performance of the tests.
\item
For alternatives with finite first and second moments, the Kullback-Leibler distance based test has the highest power among all competitors. On the other hand, its power breaks down completely for some alternatives without existing first moment, and it is low for some  alternatives with infinite second moment. Hence, the test can not really be seen as an omnibus test.
\item
Among the new tests, values of the tuning parameter around $a=3$ result in a quite homogeneous power against all alternatives. For the tests based on $D_{n,\lambda}$, $\lambda=5$ seems to be a good choice. Both classes of tests perform similarly for the ML estimator; for the median and IQR estimator, the latter is preferable. 
\item 
Among the group of edf tests, $W$ outperform the other tests in most cases.
\item 
For symmetric alternatives without existing first moment as Stable(0.4,0), Stable(0.4,0), Tukey-L(-3) and Tukey-L(-2), the tests based on $T_{n,0}, AD$ and $D_{n,6}$ perform best.
\end{itemize}

\section{Real data example: log-returns of cryptocurrencies}

In this section, we apply the tests for the Cauchy distribution to log-returns of various cryptocurrencies, namely
Bitcoin (BTC), Ethereum (ETH), Ripple (XRP), Litecoin (LTC), BitcoinCash (BCH), EOS (EOS), BinanceCoin (BNB) and Tron(TRX). 
The Cauchy distribution is found to be a comparably good model for such data sets in \cite{S:2020}. There, 58 hypothetical distributions have been fitted to 15 major cryptocurrencies, and the Cauchy model turned out to be the best fitting distribution for 10 cryptocurrencies (including all currencies considered here). In \cite{S:2020}, the number of observations of the various data sets varied widely from 638 to 2255. Further, with very large data sets, each model will be rejected in the end. Hence, we decided to consider shorter time series: a series with daily observations from January 01, 2020 to June 10, 2021, with sample size 527 (526 for ETH), and an even shorter series from January 01, 2021 to June 10, 2021, with sample size 161 (160 for ETH). All prices are closing values in U.S. dollars obtained from cryptodatadownload.com. Returns are estimated by taking logarithmic differences. Days with zero trading volume are omitted. Figure \ref{fig:crypto-hist} shows histogramms of the datasets with larger time span, together with the densities of fitted Cauchy distributions. Visually, the Cauchy model seems to be a reasonable approximation.

\begin{figure}
\centering
\includegraphics[scale=0.7]{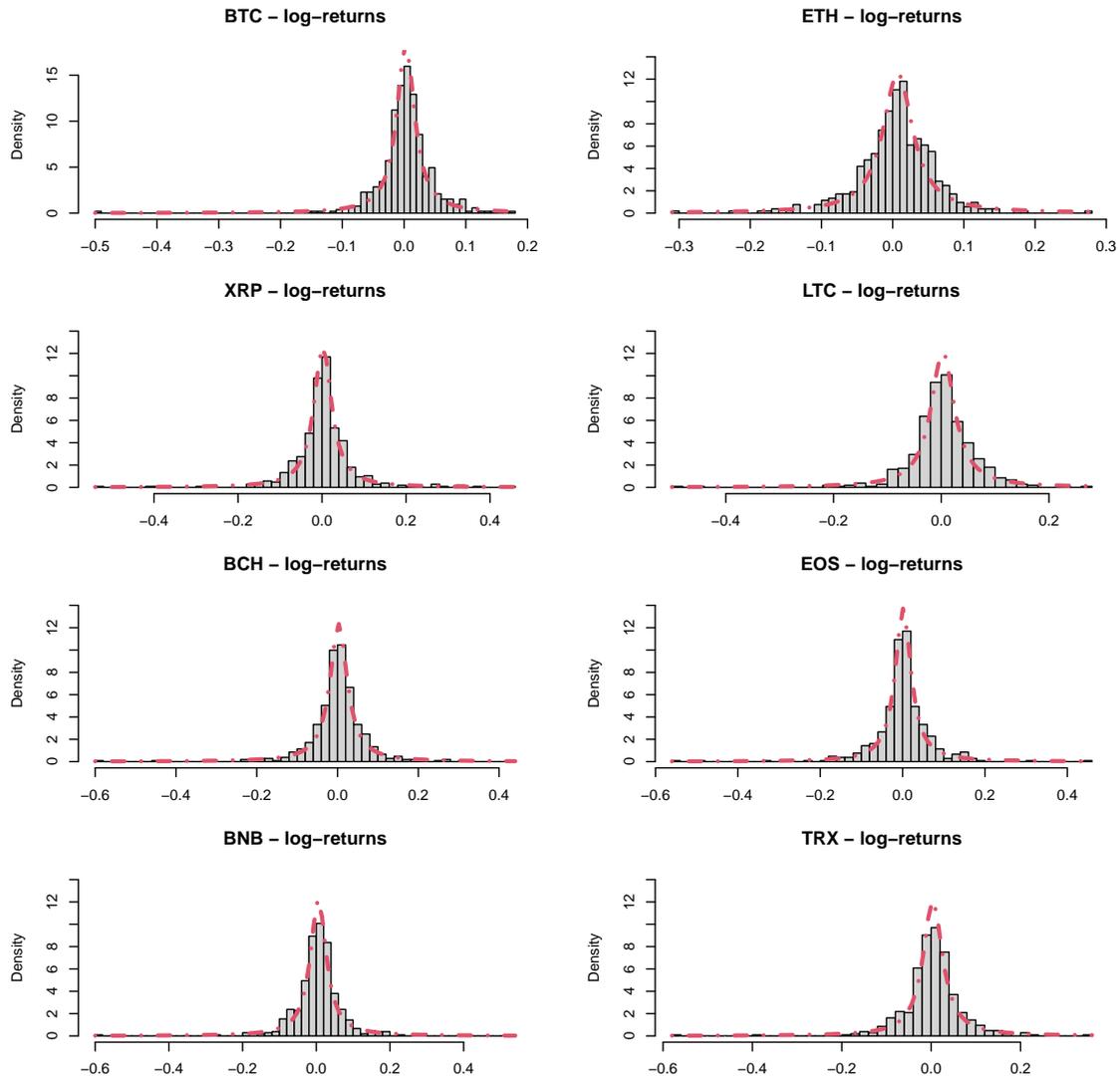}
\caption{Histograms of cryptocurrency log-returns from January\,01,\,2020 to June\,10,\,2021, together with fitted Cauchy densities}    
\label{fig:crypto-hist}
\end{figure}

Tables \ref{Table5} and \ref{Table6} report the results of the different tests for the Cauchy model for the two time series. The $p$-values are based on Monte Carlo simulation under the Cauchy model with $10^4$ replications. For the test based on the Kullback-Leibler distance, we choose the window length $m=100$ for the longer time series, and $m=50$ for the shorter one. 
The results show that even if the Cauchy distribution fits better than many other distributional models, it is still not an acceptable fit in any of the cases. The tests based on $T_{n,5}, KL$ and $D_{n,5}$ result in $p$-values of 0.000 for all currencies for the longer time series, and $p$-values smaller than 0.01 for all currencies for the shorter one. The edf based tests have generally larger $p$-values, with the Watson test having smallest, and Cram\'{e}r-von Mises test having largest $p$-values.

Overall, the EOS data seem to be most compatible with the hypothesis of a Cauchy distribution. For the shorter time series, the $p$-values of all edf based tests are larger than 0.1, and the Kolmogorov-Smirnov and Cram\'{e}r-von Mises tests don't reject $H_0$ even for the larger data set. These tests also show relatively large values for Ripple (XRP) and BitcoinCash (BCH). On the whole, the results confirm the findings of the simulation study concerning the comparative power of the test.

\begin{table}
\begin{center}
\begin{tabular}{rrrrrrrrrrrrr}
  \hline
& $T_{n,1}$ & $T_{n,3}$ & $T_{n,5}$ & $T_{n,0}$ & $KL$ & $KS$ & $CM$ & $AD$ & $W$ & $D_{n,1}$ & $D_{n,3}$ & $D_{n,5}$ \\ 
 \hline
BTC &
0.122&      0.000&      0.000&      0.001&0.000&0.005&0.066&0.003&0.000&0.000&      0.000&      0.000 \\
ETH &
0.000&      0.000&      0.000&      0.000&0.000&0.000&0.012&0.000&0.000&0.000&      0.000&      0.000 \\      
XRP &
0.357&      0.000&      0.000&      0.016&0.000&0.064&0.191&0.019&0.004&0.003&      0.000&      0.000 \\    
LTC &
0.008&      0.000&      0.000&      0.000&0.000&0.000&0.023&0.001&0.000&0.000&      0.000&      0.000 \\      
BCH &
0.036&      0.000&      0.000&      0.000&0.000&0.035&0.063&0.004&0.000&0.000&      0.000&      0.000\\    
EOS &
0.308&      0.009&      0.000&      0.559&0.000&0.119&0.277&0.044&0.024&0.006&      0.000&      0.000 \\  
BNB &
0.001&      0.000&      0.000&      0.000&0.000&0.002&0.029&0.002&0.000&0.000&      0.000&      0.000 \\      
TRX &
0.008&      0.000&      0.000&      0.001&0.000&0.015&0.042&0.002&0.000&0.000&      0.000&      0.000 \\    
\hline
\end{tabular}
\caption{\label{Table5} $p$-values of cryptocurrency log-returns (Jan.\,01,\,2020-June\,10,\,2021) for various tests for the Cauchy distribution}
\end{center}
\end{table}

\begin{table}
\begin{center}
\begin{tabular}{rrrrrrrrrrrrr}
  \hline
& $T_{n,1}$ & $T_{n,3}$ & $T_{n,5}$ & $T_{n,0}$ & $KL$ & $KS$ & $CM$ & $AD$ & $W$ & $D_{n,1}$ & $D_{n,3}$ & $D_{n,5}$ \\ 
 \hline
BTC &
0.134 &     0.000 &     0.000 &     0.019&0.000&0.062&0.151&0.041&0.005&0.002&      0.000&      0.000 \\
ETH &
0.000&      0.000&      0.000&      0.000&0.000&0.004&0.022&0.006&0.000&0.000&      0.000&      0.000 \\      
XRP &
0.216&      0.003&      0.001&      0.014&0.000&0.147&0.247&0.090&0.010&0.016&      0.002&      0.002   \\    
LTC &
0.132&      0.000&      0.000&      0.015&0.000&0.073&0.163&0.043&0.002&0.001&      0.000&      0.000 \\      
BCH &
0.049&      0.001&      0.001&      0.001&0.001&0.316&0.156&0.068&0.004&0.011&      0.002&      0.001   \\    
EOS &
0.387&      0.011&      0.001&      0.339&0.001&0.222&0.458&0.153&0.108&0.021&      0.005&      0.006     \\  
BNB &
0.023&      0.000&      0.000&      0.032&0.001&0.020&0.074&0.026&0.000&0.000&      0.000&      0.000 \\      
TRX &
0.010&      0.000&      0.000&      0.000&0.000&0.070&0.074&0.027&0.000&0.001&      0.000&      0.000   \\    
\hline
\end{tabular}
\caption{\label{Table6} $p$-values of cryptocurrency log-returns (Jan.\,01,\,2021-June\,10,\,2021), for various tests for the Cauchy distribution}
\end{center}
\end{table}

\section{Outlook}

We conclude the paper by pointing out some open problems for further research. The explicit formula of the covariance kernels in Remark \ref{rem:est} open ground to numerical approximation of the eigenvalues of the integral operator $\mathcal{K}$. Especially an approximation of the largest eigenvalue would offer more theoretical insights, since, as is well known, it is useful for efficiency statements in the sense of Bahadur, see \cite{B:1960} and \cite{N:1995}. A profound knowledge of the eigenvalues leads to explicit values of cumulants of the null distribution, such that a fit of the null distribution to the Pearson distribution system is possible. This usually gives a very accurate approximation of the quantiles of the distribution and hence of the critical values of the test, see i.e. in the context of normality testing \cite{E:2020,H:1990}. Another desirable extension for the family of tests is a data dependent choice of the tuning parameter $a$, see \cite{T:2019} for a procedure which might be applicable. 

\bibliographystyle{abbrv}
\bibliography{lit-Cauchy}  

\begin{appendix}
\section{Proofs} 
\subsection{Proof of Theorem \ref{thm:asympVer}}\label{app:proofasy1}
A first order multivariate Taylor expansion around $(\alpha,\beta)=(0,1)$ and the definition of $Y_{n,j}$ in (\ref{Y}) leads to
\begin{align}
\label{Z2}
Z_n^*(t)=\frac{1}{\sqrt{n}}\sum_{j=1}^n\bigg[&\bigg(\frac{2X_j}{1+X_j^2}+t\bigg)\cos(tX_j)+\bigg(t-\frac{2X_j}{1+X_j^2}\bigg)\sin(tX_j)\nonumber\\\
\begin{split}
+&\Bigg(\bigg(\frac{2X_j}{1+X_j^2}t-\Big(t^2+\frac{2(1-X_j^2)}{(1+X_j^2)^2}\Big)\bigg)\cos(tX_j)\\
&+\bigg(\frac{2X_j}{1+X_j^2}t+\Big(t^2+\frac{2(1-X_j^2)}{(1+X_j^2)^2}\Big)\bigg)\sin(tX_j)\Bigg)\widehat{\alpha}_n
\end{split}\\\
\begin{split}
+&X_j\Bigg(\bigg(\frac{2X_j}{1+X_j^2}t-\Big(t^2+\frac{2(1-X_j^2)}{(1+X_j^2)^2}\Big)\bigg)\cos(tX_j)\\
&+\bigg(\frac{2X_j}{1+X_j^2}t+\Big(t^2+\frac{2(1-X_j^2)}{(1+X_j^2)^2}\Big)\bigg)\sin(tX_j)\Bigg)(\widehat{\beta}_n-1)\bigg],\quad t\in\R.
\end{split}\nonumber
\end{align}
Since $\widehat{\alpha}_n$ and $\widehat{\beta}_n$ are consistent estimators, we have $(\widehat{\alpha}_n,\widehat{\beta}_n)\overset{\PP}{\rightarrow}(0,1)$ and hence
\[Z_n(t)=Z_n^*(t)+o_{\PP}(1),\quad t\in\R,\]
and by the triangle inequality as well as Slutsky's theorem we have
\[\Vert Z_n-Z_n^*\Vert_{\mathbb{H}}\overset{\PP}{\longrightarrow}0\quad\mbox{as}\;n\rightarrow\infty.\]
Note that by assumption, we have
\begin{equation}
\label{BR}
\sqrt{n}\widehat{\alpha}_n=\frac{1}{\sqrt{n}}\sum_{j=1}^n\psi_1(X_j)+r_{1n},\ \ \sqrt{n}(\widehat{\beta}_n-1)=\frac{1}{\sqrt{n}}\sum_{j=1}^n\psi_2(X_j)+r_{2n},
\end{equation}
with $r_{1n},r_{2n}=o_\PP(1)$ and $\E[\psi_1(X)]=\E[\psi_2(X)]=0$ and $\E[\psi(X)\psi(X)^\top]=C\cdot I_{2}$,  where $X\sim \mbox{C}(0,1)$, $\psi(X)=(\psi_1(X),\psi_2(X))$, $C$ is a positive constant and $I_{2}$ and is the $2\times2$ unit matrix. Denoting
\begin{equation}
\label{g}
g(t,x)=\bigg(\frac{2x}{1+x^2}t-\Big(t^2+\frac{2(1-x^2)}{(1+x^2)^2}\Big)\bigg)\cos(tx)+\bigg(\frac{2x}{1+x^2}t+\Big(t^2+\frac{2(1-x^2)}{(1+x^2)^2}\Big)\bigg)\sin(tx),
\end{equation}
we see with symmetry arguments
\[\E[g(t,X_1)]=-\frac{1}{2}\big(t^2+\vert t\vert+1\big)e^{-\vert t\vert},\quad\text{and}\quad\E[X_1g(t,X_1)]=\frac{1}{2}t\big(\vert t\vert+1\big)e^{-\vert t\vert},\]
By the law of large numbers in Hilbert spaces, we have
\begin{equation}
\label{GGZ1}
\Big\Vert\frac{1}{n}\sum_{j=1}^ng(\cdot,X_j)-\E[g(\cdot,X_1)]\Big\Vert_{\mathbb{H}}=o_{\PP}(1)
\end{equation}
and
\begin{equation}
\label{GGZ2}
\Big\Vert\frac{1}{n}\sum_{j=1}^nX_jg(\cdot,X_j)-\E[X_1g(\cdot,X_1)]\Big\Vert_{\mathbb{H}}=o_{\PP}(1).
\end{equation}
Replacing $\sqrt{n}\widehat{\alpha}_n$ and $\sqrt{n}(\widehat\beta_n-1)$ by the linear representations (\ref{BR}), and substituting the arithmetic means by their asymptotic counterparts leads to
\begin{align}
\label{Z3}
\widetilde{Z}_n(t)=&\frac{1}{\sqrt{n}}\sum_{j=1}^n\bigg[\bigg(\frac{2X_j}{1+X_j^2}+t\bigg)\cos(tX_j)+\bigg(t-\frac{2X_j}{1+X_j^2}\bigg)\sin(tX_j)\\\
&-\frac{1}{2}\big(t^2+\vert t\vert+1\big)e^{-\vert t\vert}\psi_1(X_j)+\frac{1}{2}t\big(\vert t\vert+1\big)e^{-\vert t\vert}\psi_2(X_j)\bigg],\quad t\in\R.\nonumber
\end{align}
Since for all $a_1,a_2\in\R$,  $(a_1+a_2)^2\le2a_1^2+2a_2^2$ holds, we have
\begingroup
\allowdisplaybreaks
\begin{align*}
\Vert Z_n^*-\widetilde{Z}_n\Vert_{\mathbb{H}}&= \int_{-\infty}^{\infty}\bigg\vert\frac{1}{\sqrt{n}}\sum_{j=1}^ng(t,X_j)\widehat{\alpha}_n+X_jg(t,X_j)\widehat{\beta}_n+\frac{1}{2}\big(t^2+\vert t\vert+1\big)e^{-\vert t\vert}\psi_1(X_j)\\
&\ \ \ \ \ -\frac{1}{2}t\big(\vert t\vert+1\big)e^{-\vert t\vert}\psi_2(X_j)\bigg\vert^2\omega(t)dt\\
&\overset{(\ref{BR})}{\leq}\int_{-\infty}^{\infty}\bigg\vert\bigg(\frac{1}{n}\sum_{j=1}^ng(t,X_j)+\frac{1}{2}\big(t^2+\vert t\vert+1\big)e^{-\vert t\vert}\bigg)\frac{1}{\sqrt{n}}\sum_{k=1}^n\psi_1(X_k)\\
&\ \ \ \ \ +\bigg(\frac{1}{n}\sum_{j=1}^nX_jg(t,X_j)-\frac{1}{2}t\big(\vert t\vert+1\big)e^{-\vert t\vert}\bigg)\frac{1}{\sqrt{n}}\sum_{k=1}^n\psi_2(X_k)\bigg\vert^2\omega(t)dt+o_{\PP}(1)\\
&\leq2\bigg\Vert\frac{1}{n}\sum_{j=1}^ng(\cdot,X_j)-\E[g(\cdot,X_1)]\bigg\Vert^2_{\mathbb{H}}\bigg(\frac{1}{\sqrt{n}}\sum_{k=1}^n\psi_1(X_k)\bigg)^2\\
&\ \ \ \ \ +2\bigg\Vert\frac{1}{n}\sum_{j=1}^nX_jg(\cdot,X_j)-\E[X_1g(\cdot,X_1)]\bigg\Vert^2_{\mathbb{H}}\bigg(\frac{1}{\sqrt{n}}\sum_{k=1}^n\psi_2(X_k)\bigg)^2+o_{\PP}(1).
\end{align*}
\endgroup

By (\ref{GGZ1}) and (\ref{GGZ2}), the tightness of $\frac{1}{\sqrt{n}}\sum_{j=1}^n\psi_i(X_j),i=1,2$ and Slutzky's Theorem, we have 
\[\Vert Z_n^*-\widetilde{Z}_n\Vert\overset{\PP}{\longrightarrow}0,\quad\mbox{as}\;n\rightarrow\infty.\]
Hence $Z_n$ and $\widetilde{Z}_n$ share the same weak limit. Writing
\[\widetilde{Z}_n(t)=\frac{1}{\sqrt{n}}\sum_{j=1}^n\widetilde{Z}_{n,j}(t),\ t\in\R,\]
where
\begin{align*}
\widetilde{Z}_{n,j}(t)= & \bigg(\frac{2X_j}{1+X_j^2}+t\bigg)\cos(tX_j)+\bigg(t-\frac{2X_j}{1+X_j^2}\bigg)\sin(tX_j)\\
&-\frac{1}{2}\big(t^2+\vert t\vert+1\big)e^{-\vert t\vert}\psi_1(X_j)+\frac{1}{2}t\big(\vert t\vert+1\big)e^{-\vert t\vert}\psi_2(X_j),\ t\in\R,
\end{align*}
$j=1,...,n,$ we have $\E[\widetilde{Z}_{n,1}(t)]=0$ and $\widetilde{Z}_{n,1},\widetilde{Z}_{n,2},...$ are i.i.d. centred random elements in $\mathbb{H}$. Straightforward calculations show that $K(s,t)=\E[\widetilde{Z}_{n,1}(s)\widetilde{Z}_{n,1}(t)]$ has the stated formula and the assertion follows by the central limit theorem in Hilbert spaces.\hfill$\square$


\subsection{Proof of Theorem \ref{thm:consistency}}\label{app:cons}
Let $Z_n$ and $Z_n^*$ be defined as in (\ref{eq:Zn}) and (\ref{Z2}). Since by assumption we have  $(\widehat{\alpha}_n,\widehat{\beta}_n)\overset{\PP}{\longrightarrow}(0,1)$, we can show in complete analogy to the proof of Theorem \ref{thm:asympVer}, that $\Vert n^{-1/2}(Z_n-Z_n^*)\Vert_{\mathbb{H}}\overset{\PP}{\longrightarrow}0$ holds. Let $g(\cdot,\cdot)$ be defined as in (\ref{g}) and put
\[Z_n^0(t)=\frac{1}{\sqrt{n}}\sum_{j=1}^n\bigg(\frac{2X_j}{1+X_j^2}+t\bigg)\cos(tX_j)+\bigg(t-\frac{2X_j}{1+X_j^2}\bigg)\sin(tX_j),\quad t\in\R.\]
Then
\begin{align*}
n^{-1/2}\big(Z_n^*(t)-Z_n^0(t)\big)& = \frac{1}{n}\sum_{j=1}^n\bigg[\bigg(\frac{2X_j}{1+X_j^2}t-\bigg(t^2+\frac{2(1-X_j^2)}{(1+X_j^2)^2}\bigg)\bigg)\cos(tX_j)\\
&\ \ \ \ \ \ \ \ \ \ \ +\bigg(\frac{2X_j}{1+X_j^2}t+\bigg(t^2+\frac{2(1-X_j^2)}{(1+X_j^2)^2}\bigg)\sin(tX_j)\bigg]\widehat{\alpha}_n\\
&\ \ \ \ \ \ +X_j\bigg[\bigg(\frac{2X_j}{1+X_j^2}t-\bigg(t^2+\frac{2(1-X_j^2)}{(1+X_j^2)^2}\bigg)\bigg)\cos(tX_j)\\
&\ \ \ \ \ \ \ \ \ \ \ +\bigg(\frac{2X_j}{1+X_j^2}t+\bigg(t^2+\frac{2(1-X_j^2)}{(1+X_j^2)^2}\bigg)\sin(tX_j)\bigg](\widehat{\beta}_n-1)\\
&=\widehat{\alpha}_n\frac{1}{n}\sum_{j=1}^ng(t,X_j)+(\widehat{\beta}_n-1)\frac{1}{n}\sum_{j=1}^nX_jg(t,X_j)
\end{align*}
follows and by the triangle inequality we have
\begin{align}
\label{Zn-Z0}
\Vert n^{-1/2}(Z_n^*-Z_n^0)\Vert_{\mathbb{H}}^2&=\int_{-\infty}^{\infty}\Big\vert\widehat{\alpha}_n\frac{1}{n}\sum_{j=1}^ng(t,X_j)+(\widehat{\beta}_n-1)\frac{1}{n}\sum_{j=1}^nX_jg(t,X_j)\Big\vert^2\omega(t)dt\nonumber\\
&\leq\int_{-\infty}^{\infty}\bigg(\vert\widehat{\alpha}_n\vert\Big\vert\frac{1}{n}\sum_{j=1}^ng(t,X_j)\Big\vert+\vert\widehat{\beta}_n-1\vert\Big\vert\frac{1}{n}\sum_{j=1}^nX_jg(t,X_j)\Big\vert\bigg)^2\omega(t)dt\nonumber\\
&\leq2\vert\widehat{\alpha}_n\vert^2\Big\Vert\frac{1}{n}\sum_{j=1}^ng(t,X_j)\Big\Vert_{\mathbb{H}}^2+2\vert\widehat{\beta}_n-1\vert^2\Big\Vert\frac{1}{n}\sum_{j=1}^nX_jg(t,X_j)\Big\Vert_{\mathbb{H}}^2\nonumber
\end{align}
By the law of large numbers in Hilbert spaces and $(\widehat{\alpha}_n,\widehat{\beta}_n)\overset{\PP}{\longrightarrow}(0,1)$, the right hand side is $o_\PP(1)$. Note that the existence of the mean values is guaranteed by the assumptions. Again, the law of large numbers in $\mathbb{H}$ shows
\[n^{-1/2}Z_n^0(t)\overset{a.s.}{\longrightarrow}\E\bigg[\bigg(\frac{2X}{1+X^2}+t\bigg)\cos(tX)+\bigg(t-\frac{2X}{1+X^2}\bigg)\sin(tX)\bigg],\quad\mbox{as}\;n\rightarrow\infty,\]
and by the symmetry of the weight function, we have 
\begin{align}
\label{Delta}
\frac{T_n}{n} = \Big\Vert\frac{1}{\sqrt{n}}\ Z_n\Big\Vert_{\mathbb{H}}^2\nonumber&\overset{\PP}{\longrightarrow}\int_{-\infty}^{\infty}\bigg\vert\E\bigg[\bigg(\frac{2X}{1+X^2}+t\bigg)\cos(tX)+\bigg(t-\frac{2X}{1+X^2}\bigg)\sin(tX)\bigg]\bigg\vert^2\omega(t)dt\\
&\ \ \ =\int_{-\infty}^{\infty}\bigg\vert\E\bigg[\bigg(it-\frac{2X}{1+X^2}\bigg)e^{itX}\bigg]\bigg\vert^2\omega(t)dt,\nonumber
\end{align}
as $n\rightarrow\infty$.\hfill$\square$


\subsection{Proof of Lemma \ref{lem:Hilfslemma}}\label{app:PL}
Let
\begin{align*}
\widetilde{Z}_n^{\bullet}(t) =\frac{1}{\sqrt{n}}\sum_{j=1}^n\bigg[&\bigg(\frac{2X_j}{1+X_j^2}+t\bigg)\cos(tX_j)+\bigg(t-\frac{2X_j}{1+X_j^2}\bigg)\sin(tX_j)\\
&+\E[g(t,X)]\psi_1(X_j)+\E[Xg(t,X)]\psi_2(X_j)\bigg].
\end{align*}
By a first order Taylor approximation of $Z_n$ in (\ref{eq:Zn}) around $(\alpha,\beta)=(0,1)$and the same steps as in the proof of Theorem \ref{thm:asympVer}, we have $\Vert Z_n-\widetilde{Z}_n^{\bullet}\Vert_{\mathbb{H}}=o_\PP(1)$. Putting $\widetilde{W}_n(t)=\sqrt{n}\big(\frac{\widetilde{Z}_n^{\bullet}(t)}{\sqrt{n}}-z(t)\big)$, $t\in\R$, we have

\begin{align*}
\Vert W_n-\widetilde{W}_n\Vert_{\mathbb{H}} & =\Big\Vert \sqrt{n}\Big(Z_n^\bullet-z\Big)-\sqrt{n}\Big(\frac{\widetilde{Z}_n^\bullet}{\sqrt{n}}-z\Big)\Big\Vert_\mathbb{H}=\Big\Vert \sqrt{n}\Big(\frac{Z_n}{\sqrt{n}}-\frac{\widetilde{Z}_n^\bullet}{\sqrt{n}}\Big)\Big\Vert_\mathbb{H}=\Vert Z_n-\widetilde{Z}_n^\bullet\Vert_\mathbb{H},
\end{align*}
and by the central limit theorem in Hilbert spaces
\[\sqrt{n}\bigg(\frac{\widetilde{Z_n^{\bullet}}}{\sqrt{n}}-z\bigg)=\widetilde{W}_n\overset{\mathcal{D}}{\longrightarrow}\mathcal{W}, \mbox{as}\; n\rightarrow\infty,\]
follows. Here, $\mathcal{W}\in\mathbb{H}$ is a centred Gaussian process with covariance kernel $K(s,t)=\E[(\widetilde{Z}_1^{\bullet}(s)-z(s))(\widetilde{Z}_1^{\bullet}(t)-z(t))]$. The stated representation of $K$ follows by direct calculations.\hfill $\square$
\end{appendix}
\end{document}